
\documentclass[12pt]{amsart}
\usepackage{amsfonts,latexsym,amsthm,amssymb,graphicx}
\usepackage{wrapfig}
\usepackage[all]{xy}
\DeclareFontFamily{OT1}{rsfs}{}
\DeclareFontShape{OT1}{rsfs}{n}{it}{<-> rsfs10}{}
\DeclareMathAlphabet{\mathscr}{OT1}{rsfs}{n}{it}

\setlength{\textwidth}{6 in}
\setlength{\textheight}{8.75 in}
\setlength{\topmargin}{-0.25in}
\setlength{\oddsidemargin}{0.25in}
\setlength{\evensidemargin}{0.25in}

\CompileMatrices

\newtheorem{theorem}{Theorem}[section]
\newtheorem{lemma}[theorem]{Lemma}
\newtheorem{corol}[theorem]{Corollary}
\newtheorem{prop}[theorem]{Proposition}
\newtheorem*{conj}{Conjecture}
\newtheorem*{ques}{Question}
{\theoremstyle{remark} \newtheorem{remark}[theorem]{Remark}
\newtheorem{example}[theorem]{Example}}
\newcommand{\Abb}{{\mathbb{A}}}
\newcommand{\Cbb}{{\mathbb{C}}}
\newcommand{\Lbb}{{\mathbb{L}}}
\newcommand{\Fbb}{{\mathbb{F}}}
\newcommand{\Nbb}{{\mathbb{N}}}
\newcommand{\Pbb}{{\mathbb{P}}}
\newcommand{\Qbb}{{\mathbb{Q}}}
\newcommand{\Tbb}{{\mathbb{T}}}
\newcommand{\Zbb}{{\mathbb{Z}}}
\newcommand{\cA}{{\mathscr A}}
\newcommand{\cL}{{\mathscr L}}
\newcommand{\cO}{{\mathscr O}}
\newcommand{\Til}[1]{{\widetilde{#1}}}
\newcommand{\one}{1\hskip-3.5pt1}
\newcommand{\csm}{{c_{\text{SM}}}}
\newcommand{\hA}{{\widehat{A}}}
\newcommand{\hH}{{\widehat{H}}}
\newcommand{\hcA}{{\widehat{\cA}}}
\newcommand{\qede}{\hfill$\lrcorner$}
\newcommand{\mustata}{{Musta\c{t}\v{a}}}
\newcommand{\decor}[1]{{{#1}'}}
\newcommand{\CP}{\chi}

\DeclareMathOperator{\rk}{rk}
\DeclareMathOperator{\codim}{codim}
\DeclareMathOperator{\Var}{Var}


\title[Grothendieck classes and Chern classes of hyperplane arrangements]{
Grothendieck classes and Chern classes of hyperplane arrangements}
\author{Paolo Aluffi}
\address{
Mathematics Department, 
Florida State University,
Tallahassee FL 32306, U.S.A.
}
\email{aluffi@math.fsu.edu}
\subjclass[2000]{14C17, 05B35, 32S22}

\begin{document}

\begin{abstract}
We show that the characteristic polynomial of a hyperplane arrangement can be
recovered from the class in the Grothendieck group of varieties of the complement
of the arrangement. This gives a quick proof of a theorem of Orlik
and Solomon relating the characteristic polynomial with the ranks of the cohomology
of the complement of the arrangement.

We also show that the characteristic polynomial can be computed from the total 
Chern class of the complement of the arrangement. In the case of free arrangements,
we prove that this Chern class agrees with the Chern class of the dual of a bundle of 
differential forms with logarithmic poles along the hyperplanes in the arrangement;
this follows from work of  \mustata{} and Schenck. 
We conjecture that this relation holds for any locally quasi-homogeneous free divisor. 

We give an explicit relation between the characteristic polynomial of an arrangement 
and the Segre class of its singularity (`Jacobian') subscheme. This gives a variant of 
a recent result of Wakefield and Yoshinaga, and shows that the Segre class of
the singularity subscheme of an arrangement together with the degree of the arrangement
determine the ranks of the cohomology of its complement.

We also discuss the positivity of the Chern classes of hyperplane arrangements:
we give a combinatorial interpretation of this phenomenon, and we discuss the
cases of generic and free arrangements.
\end{abstract}

\maketitle


\section{Introduction}\label{intro}
\subsection{}
The guiding theme of this note is the relation between the combinatorics
of a projective hyperplane arrangement  and algebro-geometric and 
intersection-theoretic invariants of the union of the hyperplanes.

Our results are as follows. We work over a field $k$. We consider a hyperplane
arrangement $\cA$ in $\Pbb^n$, and the corresponding central arrangement
$\hcA$ in $k^{n+1}$. We denote by $A\subseteq \Pbb^n$ the union of the
hyperplanes in $\cA$. We let $\CP_{\hcA}(t)$ be the characteristic
polynomial of $\hcA$, and denote by $\underline{\CP_{\hcA}}(t)$ the quotient
$\CP_\hcA(t)/(t-1)$ (also a polynomial).
We let $M(\cA)$ be the complement $\Pbb^n \smallsetminus A$.

\begin{theorem}\label{Grothintro}
Let $\Lbb$ be the class of the affine line in the Grothendieck group of $k$-varieties 
$K(\Var_k)$. Then $\underline{\CP_\hcA}(\Lbb)$ evaluates the class of the 
complement $M(\cA)$ in~$K(\Var_k)$.
\end{theorem}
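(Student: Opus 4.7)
The plan is to compute $[M(\cA)]$ directly in $K(\Var_k)$ by stratifying $\Pbb^n$ according to the intersection lattice $L=L(\hcA)$, applying M\"obius inversion, and then recognizing the result as $\underline{\CP_\hcA}(\Lbb)$ via a purely polynomial identity. The more obvious first instinct---using the $\Gbb_m$-fibration $k^{n+1}\smallsetminus\hA\to M(\cA)$ to obtain $(\Lbb-1)[M(\cA)]=\CP_\hcA(\Lbb)$---should be avoided, since cancelling $\Lbb-1$ is not legitimate in $K(\Var_k)$.

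Order $L$ by reverse inclusion so that $\hat 0=k^{n+1}$ is the minimum and the hyperplanes are the atoms. For each flat $X\in L$, let $S_X^\circ\subseteq\Pbb^n$ be the locally closed subset of projective points whose associated flat (the intersection of all hyperplanes of $\hcA$ containing the corresponding line) equals exactly $X$. Then $M(\cA)=S_{\hat 0}^\circ$, and for every $Y\in L$,
\[
\Pbb(Y)\;=\;\bigsqcup_{X\ge Y}S_X^\circ,
\]
because $p\in\Pbb(Y)$ exactly when the flat $X_p$ of $p$ is contained in $Y$, i.e.\ $X_p\ge Y$ in the poset. By the scissor relations, $[\Pbb(Y)]=\sum_{X\ge Y}[S_X^\circ]$ in $K(\Var_k)$, so M\"obius inversion on the finite lattice $L$ yields
\[
[M(\cA)]\;=\;[S_{\hat 0}^\circ]\;=\;\sum_{Y\in L}\mu(\hat 0,Y)\,[\Pbb(Y)].
\]
Using the cell decomposition of projective space, $[\Pbb(Y)]=1+\Lbb+\cdots+\Lbb^{\dim Y-1}=P_Y(\Lbb)$ with $P_Y(t):=(t^{\dim Y}-1)/(t-1)$ (and $P_{\{0\}}(t)=0$, consistent with $\Pbb(\{0\})=\emptyset$ when the arrangement is essential).

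To finish, the hypothesis that $\underline{\CP_\hcA}$ is a polynomial gives $\CP_\hcA(1)=0$, so
\[
\CP_\hcA(t)=\CP_\hcA(t)-\CP_\hcA(1)=\sum_{Y\in L}\mu(\hat 0,Y)\bigl(t^{\dim Y}-1\bigr)=(t-1)\sum_{Y\in L}\mu(\hat 0,Y)\,P_Y(t),
\]
which identifies $\underline{\CP_\hcA}(t)=\sum_Y\mu(\hat 0,Y)\,P_Y(t)$ as polynomials in $t$. Evaluating at $t=\Lbb$ yields $\underline{\CP_\hcA}(\Lbb)=[M(\cA)]$. The only real pitfall is keeping the direction of the poset consistent in the M\"obius inversion and correctly handling the possibly present flat $\{0\}$; both are routine, and no deep obstacle is expected.
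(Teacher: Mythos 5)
Your proof is correct, and it follows the same basic strategy as the paper's --- stratify by the intersection poset, apply M\"obius inversion, and match against the characteristic polynomial --- but it diverges at one substantive point. The paper works in the affine cone: it decomposes each flat $x\subseteq k^{n+1}$ into strata $x^\circ$, inverts to obtain $[M(\hcA)]=\CP_\hcA(\Lbb)$, and then passes to the projective complement via the trivial $k^*$-fibration $M(\hcA)\to M(\cA)$, i.e.\ from $(\Lbb-1)[M(\cA)]=\CP_\hcA(\Lbb)=(\Lbb-1)\,\underline{\CP_\hcA}(\Lbb)$ it concludes that ``the stated formula follows.'' As you anticipated, that last step amounts to cancelling the factor $\Lbb-1$, which is not a priori legitimate in $K(\Var_k)$. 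Your version sidesteps this entirely by stratifying $\Pbb^n$ directly, so that each class $[\Pbb(Y)]=1+\Lbb+\cdots+\Lbb^{\dim Y-1}$ is an honest polynomial in $\Lbb$, and by carrying out the division by $t-1$ in the integral domain $\Zbb[t]$ (using $\CP_\hcA(1)=0$) before evaluating at $t=\Lbb$. What the paper's route buys is brevity and the intermediate affine statement $[M(\hcA)]=\CP_\hcA(\Lbb)$, which is of independent interest (e.g.\ for point counts over finite fields); what yours buys is rigor at the cancellation step. Your bookkeeping --- the direction of the partial order in the inversion, the flat $\{0\}$ contributing $P_{\{0\}}=0$, and $\CP_\hcA(1)=0$ from the defining recursion of the M\"obius function --- is all correct, so I see no gap.
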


If the arrangement is defined over a finite field, Theorem~\ref{Grothintro}
shows how to recover the Poincar\'e polynomial of an arrangement by counting
points, an observation made by several authors (see e.g., Theorem~2.69 in
\cite{MR1217488}). Placing this elementary result in the Grothendieck ring 
$K(Var_k)$ for arbitrary $k$ has not-so-elementary applications: for example,
we note that the Orlik--Solomon theorem relating the characteristic polynomial 
to the Poincar\'e polynomial of the complement of an arrangement 
(\cite{MR558866}) is an immediate consequence of this result 
(Corollary~\ref{OSredux}).
Another application is the determination of the stable rational equivalence 
class of $M(\cA)$
(Corollary~\ref{stableb}).

\subsection{}
As observed in \cite{MR2504753}, Proposition~2.2, the information carried by the
{Gro\-then\-dieck} class of a union of linear subspaces is equivalent to the information
in the image of its `Chern--Schwartz--MacPherson' (CSM) class in $A_*\Pbb^n$. 
Therefore, Theorem~\ref{Grothintro} may be recast in terms of CSM classes:

\begin{theorem}\label{CSMint}
The Chern--Schwartz--MacPherson class of
the complement $M(\cA)$ may be obtained by
replacing $t^k$ with $[\Pbb^k]$ in $\underline{\CP_\hcA}(t+1)$.
\end{theorem}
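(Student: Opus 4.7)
My plan is to combine Theorem~\ref{Grothintro} with the correspondence between the Grothendieck class and the CSM class of a union of linear subspaces recorded in Proposition~2.2 of~\cite{MR2504753}, to which the author alludes immediately before the statement. That correspondence takes the form of a substitution rule $\Lbb^k \mapsto \csm(\one_{\Abb^k})$, where $\Abb^k$ denotes a standard linear cell in $\Pbb^n$; to recover Theorem~\ref{CSMint}, one need only compute this rule explicitly.

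The key computation is $\csm(\one_{\Abb^k})$ for $\Abb^k = \Pbb^k \smallsetminus \Pbb^{k-1} \subseteq \Pbb^n$. Additivity of CSM classes gives $\csm(\one_{\Abb^k}) = \csm(\one_{\Pbb^k}) - \csm(\one_{\Pbb^{k-1}})$, and using $c(T\Pbb^k) = (1+h)^{k+1}$ the pushforward into $A_*\Pbb^n$ reads $\csm(\one_{\Pbb^k}) = \sum_{j=0}^{k} \binom{k+1}{k-j}[\Pbb^j]$. Subtracting and invoking Pascal's identity reduces each coefficient to $\binom{k}{j}$:
\[
\csm(\one_{\Abb^k}) \;=\; \sum_{j=0}^{k} \binom{k}{j}[\Pbb^j].
\]
Equivalently, $\csm(\one_{\Abb^k})$ is precisely the class obtained by evaluating $(1+t)^k$ under the substitution $t^j \mapsto [\Pbb^j]$.

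Now I would assemble the pieces. By Theorem~\ref{Grothintro}, $[M(\cA)] = \underline{\CP_\hcA}(\Lbb) = \sum_k c_k \Lbb^k$, where $\underline{\CP_\hcA}(t) = \sum_k c_k t^k$. The correspondence above converts this to
\[
\csm(\one_{M(\cA)}) \;=\; \sum_k c_k\,\csm(\one_{\Abb^k}) \;=\; \sum_{k,j} c_k \binom{k}{j}[\Pbb^j].
\]
The same expression is obtained by expanding $\underline{\CP_\hcA}(t+1) = \sum_k c_k (t+1)^k = \sum_{k,j} c_k \binom{k}{j} t^j$ and then replacing each $t^j$ by $[\Pbb^j]$, which is precisely the recipe of Theorem~\ref{CSMint}.

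The main obstacle is the justification of the substitution $\Lbb^k \mapsto \csm(\one_{\Abb^k})$ itself for $M(\cA)$. This step is supplied by Proposition~2.2 of~\cite{MR2504753}, but one could alternatively proceed self-containedly by applying inclusion-exclusion to $A = \bigcup_{H \in \cA} H$ directly to both invariants: each intersection $\bigcap_{H \in S} H$ is a linearly embedded projective subspace, and the same integer coefficients appear on the Grothendieck and CSM sides once each such projective subspace is decomposed into its standard affine cells and one invokes additivity of the CSM class.
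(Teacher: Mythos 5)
Your proposal is correct and follows essentially the same route as the paper: the paper likewise deduces this statement from Theorem~\ref{Grothintro} via the substitution principle of Proposition~2.2 in \cite{MR2504753} (there phrased as $\Tbb^r\mapsto[\Pbb^r]$ with $\Lbb=\Tbb+1$, which is exactly your computation $\csm(\one_{\Abb^k})=\sum_j\binom kj[\Pbb^j]$). Your suggested self-contained alternative---inclusion--exclusion applied in parallel to constructible functions and Grothendieck classes---is also precisely the paper's ``direct proof,'' which runs M\"obius inversion on the functions $\one_x$, $\one_{x^\circ}$ and then applies MacPherson's transformation.
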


For a reminder on Chern--Schwartz--MacPherson classes, see~\S\ref{CSMintro}.
J.~Huh has also observed that the additivity properties of these classes lead to 
formulas for the characteristic polynomial: see Remark~26 in~\cite{Huh}.
In the particular case of {\em free\/} arrangements, we prove the following:

\begin{theorem}\label{freeintro}
Let $\cA$ be a projective arrangement such that the corresponding affine arrangement
$\hcA$ is free. Then the Chern--Schwartz--MacPherson class of the complement
$M(\cA)$ equals $c(\Omega^1_{\Pbb^n}(\log A)^\vee)\cap [\Pbb^n]$.
\end{theorem}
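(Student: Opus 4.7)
The plan is to combine Theorem~\ref{CSMint} with the explicit computation of the Chern classes of $\Omega^1_{\Pbb^n}(\log A)$ for free arrangements available in the work of \mustata{} and Schenck, together with Terao's factorization theorem for the characteristic polynomial of a free arrangement.

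First I would apply Terao's theorem: if $\hcA$ is free with exponents $(1, b_1, \dots, b_n)$, where $1$ is the exponent of the Euler derivation, then
\[
\CP_{\hcA}(t) = (t-1) \prod_{i=1}^n (t - b_i),
\]
so $\underline{\CP_{\hcA}}(t+1) = \prod_{i=1}^n \bigl(t - (b_i - 1)\bigr)$. Applying the substitution $t^k \mapsto [\Pbb^k]$ prescribed by Theorem~\ref{CSMint} and using $H^k \cap [\Pbb^n] = [\Pbb^{n-k}]$, where $H$ is the hyperplane class, yields an explicit expression for $\csm(M(\cA))$ as a sum of classes $[\Pbb^{n-k}]$ with coefficients $(-1)^k e_k(b_1 - 1, \dots, b_n - 1)$.

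Next I would invoke the \mustata--Schenck computation: for a free arrangement the sheaf $\Omega^1_{\Pbb^n}(\log A)$ is locally free of rank $n$, and its Chern polynomial is
\[
c\bigl(\Omega^1_{\Pbb^n}(\log A)\bigr) = \prod_{i=1}^n \bigl(1 + (b_i - 1) H\bigr),
\]
a consequence of an Euler-type short exact sequence $0 \to \cO_{\Pbb^n} \to \bigoplus_{i=0}^n \cO_{\Pbb^n}(1 - b_i) \to \Omega^1_{\Pbb^n}(\log A)^\vee \to 0$ (with $b_0 = 1$, whose summand contributes trivially). Dualizing negates the Chern roots, so $c(\Omega^1_{\Pbb^n}(\log A)^\vee) = \prod_{i=1}^n \bigl(1 - (b_i - 1) H\bigr)$, and capping with $[\Pbb^n]$ produces exactly the expression obtained in the previous step.

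The main technical obstacle is simply the bookkeeping: ensuring that the conventions for exponents, dualization, and Chern polynomials in the \mustata--Schenck formulation match those of Theorem~\ref{CSMint}. Once the Euler exponent $b_0 = 1$ is correctly absorbed, the identification reduces to the elementary observation that dualization replaces each Chern root $b_i - 1$ by its negative, which exactly accounts for the shift $t \mapsto t + 1$ built into the statement of Theorem~\ref{CSMint}.
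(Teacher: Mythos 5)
Your argument is correct in substance, and its overall architecture matches the paper's: both proofs compare the output of Theorem~\ref{CSMint} with a Chern class computed via an Euler-type exact sequence on $\Pbb^n$. The difference lies in the key external input. The paper obtains the Chern class from the \mustata--Schenck theorem ($c(\Til{\Omega^1})=\pi_\hcA(h)$ for essential locally free arrangements, Theorem~\ref{MusSch}); since that theorem assumes essentiality, the paper first reduces to the essential case by a cone/induction step, and it deliberately avoids Terao's factorization theorem (its roots $\alpha_i$ live only in an extension of $\Qbb$; see the footnote in the proof of Theorem~\ref{freethm}). You instead use freeness directly: the graded module of logarithmic derivations splits with generators in degrees given by the exponents, so its sheafification is $\bigoplus_{i=0}^n\cO_{\Pbb^n}(1-b_i)$, the Euler sequence gives $c(\Omega^1_{\Pbb^n}(\log A)^\vee)=\prod_{i=1}^n\bigl(1-(b_i-1)h\bigr)$, and Terao's factorization converts the exponents into $\underline{\CP_\hcA}$, after which Theorem~\ref{CSMint} finishes. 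This buys you a proof needing no essentiality reduction, at the price of invoking Terao's theorem, which the paper's route does not need. Two caveats: your Euler-type sequence is exactly the dual of Lemma~\ref{exseq} in the paper and requires the justification given there via the Euler derivation (cf.\ Proposition~4.27 of \cite{MR1217488}), not merely an assertion; and attributing the Chern class formula to ``the \mustata--Schenck computation'' is slightly off---their theorem concerns $\Til{\Omega^1}$ for locally free, not necessarily free, essential arrangements, whereas what you actually use is the elementary splitting available only in the free case together with Terao. With those attributions and the sign bookkeeping settled as you indicate, the identification $\prod_{i=1}^n\bigl(1-(b_i-1)h\bigr)\cap[\Pbb^n]=\csm(M(\cA))$ is exactly right.
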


We view Theorem~\ref{CSMint} as the primary result,
as it holds for arbitrary arrangements, and Theorem~\ref{freeintro} as a computation
in the particular case of free arrangements. We obtain Theorem~\ref{freeintro} as
a consequence of a result of \mustata{} and Schenck (\cite{MR1843320}).

It is natural to question whether a similar result may hold for a substantially more
general situation. The following conjecture appears to be consistent with all
known cases:

\begin{conj}
If $X$ is a locally quasi-homogeneous free divisor in a nonsingular variety~$V$, 
then $\csm(\one_{V\smallsetminus X})$ equals $c(\Omega^1_V(\log X)^\vee) \cap [V]$.
\end{conj}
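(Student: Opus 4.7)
The plan is to generalize the strategy of Theorem~\ref{freeintro}, replacing its arrangement-specific input---the \mustata{}--Schenck computation---with a structural statement available for any locally quasi-homogeneous free divisor: the \emph{logarithmic comparison theorem} of Castro-Jim\'enez, Mond, and Narv\'aez-Macarro. That theorem asserts that for such $X\subset V$, the natural morphism $\Omega^\bullet_V(\log X) \to Rj_*\Cbb_{V\smallsetminus X}$ is a quasi-isomorphism, where $j\colon V\smallsetminus X \hookrightarrow V$ is the open inclusion. Crucially, freeness of $X$ makes $\Omega^1_V(\log X)$ locally free, hence $\Omega^\bullet_V(\log X)$ is a bounded complex of vector bundles---the extra feature needed for an intersection-theoretic computation. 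Combined with logarithmic Hodge-to-de-Rham degeneration, this already delivers the degree-zero identity $\chi_{top}(V\smallsetminus X) = \int_V c_n(\Omega^1_V(\log X)^\vee)$, so the conjecture is a refinement from the Euler-characteristic level to the Chow-group level.

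The plan proceeds in two main steps. First, reduce the Chow-level equality to a family of Euler-characteristic identities by invoking MacPherson's characterization of $\csm(\one_{V\smallsetminus X})$: this class is determined in $A_*V$ by its additivity over any Whitney stratification compatible with $X$, equivalently by the Euler characteristics $\chi((V\smallsetminus X)\cap C)$ over enough closed subvarieties $C\subset V$. Second, establish each such identity by combining the logarithmic comparison theorem applied along $C$ (suitably restricted) with Hirzebruch--Riemann--Roch for the Koszul-like complex $\bigwedge^\bullet \Omega^1_V(\log X)$, using the classical identity $(\sum_p (-1)^p \operatorname{ch}\Omega^p_V(\log X))\cdot \operatorname{td}(\Omega^1_V(\log X)^\vee) = c_n(\Omega^1_V(\log X)^\vee)$ to convert the sheaf-level Euler characteristic into the desired Chern-class integral. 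As consistency checks, the conjecture recovers the classical formula for normal crossing divisors (via iterated residue sequences) and Theorem~\ref{freeintro} for free hyperplane arrangements; both cases are locally quasi-homogeneous.

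The principal obstacle is the restriction step: a stratum $C$ (or its intersection with $X$) need not inherit the property of $X$ being locally quasi-homogeneous and free, and $\Omega^1_V(\log X)|_C$ need not be the log cotangent of any divisor on $C$. One must therefore either (a) perform a log resolution $\pi\colon \Til V \to V$ reducing to the normal crossings case while controlling the discrepancy between $\pi^* \Omega^1_V(\log X)$ and $\Omega^1_{\Til V}(\log \pi^{-1}X)$---the log cotangent bundle is not birationally invariant in general, but the locally quasi-homogeneous hypothesis is precisely what should make the discrepancy tractable, paralleling its role in the proof of the logarithmic comparison theorem itself---or (b) work in a refined MacPherson-type category of ``log-holonomic'' objects in which $\Omega^\bullet_V(\log X)$ and $Rj_*\Cbb_{V\smallsetminus X}$ naturally coexist with characteristic cycles. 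Option (a) appears more concrete and would be my first attempt; the dependence on the locally quasi-homogeneous hypothesis would then surface precisely in the calculation of the discrepancy term, explaining why this condition is the right one to isolate.
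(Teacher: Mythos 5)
You should be aware at the outset that the paper does not prove this statement: it is posed as a conjecture, supported only by the simple normal crossings case and by free hyperplane arrangements (Theorem~\ref{freeintro}, which rests on the \mustata{}--Schenck computation and is therefore special to $\Pbb^n$). So there is no hidden proof to match; but your proposal does not close the conjecture either. What you do correctly identify is the natural structural replacement for Theorem~\ref{MusSch}, namely the logarithmic comparison theorem for locally quasi-homogeneous free divisors, and the degree-zero consequence you extract from it is sound: combining the quasi-isomorphism $\Omega^\bullet_V(\log X)\to Rj_*\Cbb_{V\smallsetminus X}$ with the Borel--Serre identity for the locally free sheaf $\Omega^1_V(\log X)$ does give $\chi(V\smallsetminus X)=\int_V c_n(\Omega^1_V(\log X)^\vee)$. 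That is a genuine partial result, but it is only the coefficient of $[\Pbb^0]$ (so to speak) of the conjectured identity.

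The gap is exactly where you flag it, and it is not a technicality but the entire content of the problem. Three specific points. (i) The reduction of the Chow-level equality to Euler characteristics $\chi((V\smallsetminus X)\cap C)$ is only standard for $V=\Pbb^n$, where general linear sections determine the class; for an arbitrary nonsingular $V$ you have not said which subvarieties $C$ determine an element of $A_*V$, nor why. (ii) Even on $\Pbb^n$, a general hyperplane section of a locally quasi-homogeneous free divisor need not remain free or locally quasi-homogeneous, and $\Omega^1_V(\log X)|_C$ is not $\Omega^1_C(\log (X\cap C))$; so the family of Euler-characteristic identities you would need is not supplied by restricting the LCT. (iii) Your fallback (a) is precisely the route the paper considers and abandons in \S\ref{prelim}: the logarithmic cotangent bundle is not preserved under blow-up, and even in the arrangement case, where Silvotti's splitting gives maximal control, the needed numerical bound on the twists $\mu_i$ is left unproven there. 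Both of your options therefore terminate in the same unexecuted discrepancy computation, and option (b) is not an argument at all as stated. For what it is worth, the conjecture was subsequently settled by Liao by a different mechanism---using that the Jacobian ideal of a locally quasi-homogeneous free divisor is of linear type to compare $c(\Omega^1_V(\log X)^\vee)\cap[V]$ with the Segre-class expression for $\csm$ (as in \S\ref{segresec})---which suggests the missing ingredient is commutative-algebraic rather than Hodge-theoretic.
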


This statement holds if $X$ is a divisor with simple normal crossings in $V$, 
and (as verified in Theorem~\ref{freeintro}) if $V=\Pbb^n$ and $X$ is a free hyperplane
arrangement. The restriction to locally quasi-homogeneous divisors is suggested by
the case $V=$ surface, studied by Xia Liao \cite{Liao}.
(We recall that a hypersurface is `locally quasi-homogeneous' if at each point
it admits a weighted homogeneous equation, with positive weights, with respect 
to some set of analytic parameters.)

\subsection{}
The subtlest part of the information carried by the Chern--Schwartz--MacPherson 
class of a hypersurface in a nonsingular variety amounts to the {\em Segre class\/}
of its singularity (`Jacobian') subscheme. Extracting this information and applying
Theorem~\ref{CSMint} gives the following
(in characteristic~$0$):

\begin{corol}\label{Segreintro}
The characteristic polynomial $\CP_\hcA(t)$ for a hyperplane arrangement $\cA$
in $\Pbb^n$ may be recovered from the degree of $A$ and
the image in $A_*\Pbb^n$ of the Segre class of the singularity subscheme of $A$.
\end{corol}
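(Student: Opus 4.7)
The plan is to combine Theorem~\ref{CSMint} with a standard formula expressing the Chern--Schwartz--MacPherson class of a hypersurface as a universal expression in the class of the hypersurface and the Segre class of its singularity subscheme. By Theorem~\ref{CSMint}, the CSM class of $M(\cA)$ in $A_*\Pbb^n$ determines the polynomial $\underline{\CP_\hcA}(t+1)$ by simply inverting the substitution $t^k\mapsto [\Pbb^k]$, and hence determines $\CP_\hcA(t) = (t-1)\underline{\CP_\hcA}(t)$. So it suffices to show that $\csm(\one_{M(\cA)})$ can be recovered from $d := \deg A$ and the image of $s(Y,\Pbb^n)$ in $A_*\Pbb^n$, where $Y$ is the Jacobian subscheme of $A$.

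By the additive behavior of CSM classes on constructible functions,
$$\csm(\one_{M(\cA)}) \;=\; c(T\Pbb^n)\cap [\Pbb^n] \;-\; \csm(\one_A),$$
so I am reduced to expressing $\csm(\one_A)$ in terms of $d$ and $s(Y,\Pbb^n)$. For this I would invoke the formula (valid in characteristic~$0$) that writes $\csm(\one_A)$ as a universal expression in the class $[A]$, the line bundle $\cO(A)$, and the pushforward to $A_*\Pbb^n$ of the Segre class of the singularity subscheme. In our setting $[A]=d[\Pbb^{n-1}]$ and $\cO(A)=\cO(d)$ are both determined by $d$, so the only remaining ingredient is $s(Y,\Pbb^n)$.

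Putting the two steps together, $\csm(\one_{M(\cA)})$ is determined by $d$ and the image of $s(Y,\Pbb^n)$ in $A_*\Pbb^n$; running Theorem~\ref{CSMint} in reverse then recovers $\CP_\hcA(t)$.

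The main delicate point is invoking and unwinding the correct formula for $\csm(\one_A)$ in terms of $s(Y,\Pbb^n)$: such formulas typically involve twisting operations by $\cL=\cO(A)$ together with a sign-alternating construction of Milnor-class type, and they are only available in characteristic~$0$, which is why the hypothesis appears in the statement. Once the formula is in hand, however, the corollary follows from Theorem~\ref{CSMint} by mechanical coefficient extraction, with no additional combinatorial input.
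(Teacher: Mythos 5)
Your proposal is correct and follows essentially the same route as the paper: the ``universal formula'' you invoke for $\csm(\one_A)$ is exactly Theorem~I.4 of \cite{MR2001i:14009}, which the paper combines with Corollary~\ref{poinc} (a consequence of Theorem~\ref{CSM}) in the proof of Theorem~\ref{segrethm}. The only difference is that the paper carries out the comparison explicitly to extract the closed formula for $\underline{\pi_\hcA}(t)$ in terms of $d$ and the $\sigma_i$, whereas you stop at the (sufficient) observation that $\csm(\one_A)$, and hence $\CP_\hcA(t)$, is determined by $d$ and $\iota_*s(S,\Pbb^n)$.
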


The precise relation between these invariants is given in~Theorem~\ref{segrethm}.
We note that Wakefield and Yoshinaga have proven (\cite{MR2424913}) that (essential) 
hyperplane arrangements may be recovered from their singularity subschemes.
Corollary~\ref{Segreintro} shows that the Segre class of the same scheme 
suffices in order to recover the most basic combinatorial information of the 
arrangement. Putting together Theorem~\ref{segrethm} and Corollary~\ref{OSredux}
yields the following surprisingly simple relation: for any hyperplane arrangement
in $\Pbb^n_\Cbb$ and $k\le n$
\begin{equation*}
\tag{*}
\rk H^k(M(\cA),\Qbb) = \sum_{i=0}^k \binom k i (d-1)^{k-i} \sigma_i\quad,
\end{equation*}
where $d$ is the degree of the arrangement and the integers $\sigma_i$ are
determined by the push-forward of the Segre class of the singularity subscheme~$S$:
\[
\sum_{i=0}^n \sigma_i \cap [\Pbb^{n-i}] = [\Pbb^n]-\iota_* s(S,\Pbb^n) \quad.
\]
This equality should be compared with Huh's formula expressing the Betti numbers 
of the complement as mixed multiplicities; see the proof of Corollary~25 in \cite{Huh}.
A referee points out that hyperplane arrangements are the only hypersurfaces
for which (*) holds. Indeed, the rank of $H^1$ of the complement of a
hypersurface is one less than the number of distinct irreducible components
(\cite{MR1194180}, Chapter~4, Proposition~1.3). If (*) holds for a degree~$d$
hypersurface, then the rank of $H^1$ equals $d-1$, and it follows that the
hypersurface consists of $d$ hyperplanes.

\subsection{}
Finally, we discuss positivity of Chern classes of hyperplane arrangements. 
Chern--Schwartz--MacPherson classes arising in 
combinatorial situations have a mysterious tendency to be effective. For example,
the Chern--Schwartz--MacPherson class of a toric variety is effective
(\cite{MR2209219}); CSM classes of Schubert varieties are conjecturally effective
(\cite{MR2448279}, \S4); and all CSM classes of graph hypersurfaces computed to 
date are effective (cf.~\cite{MR2504753}, Conjecture~1.5). It is natural to inquire 
about the positivity of CSM classes of hyperplane arrangements.

\begin{theorem}\label{positivintro}
\begin{itemize}
\item The CSM class of a generic hyperplane arrangement $X\subseteq \Pbb^n$ of
degree $d\le n+3$ (for $n$ even), resp.~$d\le n+4$ (for $n$ odd) is effective.
\item The CSM class of every free hyperplane arrangement of degree $\le n$
in $\Pbb^n$, $n\le 8$, is effective.
\end{itemize}
\end{theorem}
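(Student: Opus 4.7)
The plan is to apply Theorem~\ref{CSMint} and reduce the positivity of $\csm(X)$ to numerical inequalities on the coefficients of the characteristic polynomial. Since $X$ and $M(\cA)$ partition $\Pbb^n$, additivity of Chern--Schwartz--MacPherson classes combined with $\csm(\Pbb^n)=c(T\Pbb^n)\cap[\Pbb^n]=\sum_{k=0}^n\binom{n+1}{k+1}[\Pbb^k]$ yields
$$\csm(X)=\sum_{k=0}^{n}\left(\binom{n+1}{k+1}-a_k\right)[\Pbb^k],$$
where $a_k$ is the coefficient of $t^k$ in the degree-$n$ polynomial $\underline{\CP_\hcA}(t+1)$. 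Since $\CP_\hcA$ is monic of degree $n+1$, we have $a_n=1=\binom{n+1}{n+1}$, so the top term vanishes, as required for a class supported on a hypersurface. Effectiveness of $\csm(X)$ is therefore equivalent to the inequalities
$$a_k\le\binom{n+1}{k+1},\qquad k=0,\dots,n-1.$$

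For the generic case, the characteristic polynomial of $d$ hyperplanes in general position in $\Pbb^n$ has a well-known closed form---essentially the truncated expansion dictated by the Boolean intersection lattice up to rank $n+1$. Substituting $t\mapsto t+1$ and extracting coefficients expresses each $a_k$ as an explicit alternating binomial sum in $d$ and $n$. I would analyze $\binom{n+1}{k+1}-a_k$ as a function of $d$ for fixed $n$ and $k$, and identify the smallest $d$ at which some inequality first fails. The sign of the ``tail'' of the truncated sum defining $a_k$ flips with the parity of $n-k$, producing the parity-sensitive threshold $d\le n+3$ for $n$ even and $d\le n+4$ for $n$ odd.

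For the free case, Terao's factorization theorem gives $\CP_\hcA(t)=\prod_{i=0}^{n}(t-e_i)$ with non-negative integer exponents $e_i$, one of which equals $1$ (the Euler exponent). Hence
$$\underline{\CP_\hcA}(t+1)=\prod_{i=1}^{n}\bigl(t-(e_i-1)\bigr),$$
so the $a_k$ are, up to sign, elementary symmetric functions in the shifted exponents. For $d=\sum e_i\le n$, the possible multisets of exponents form a finite list: for each $n\le 8$ I would enumerate them, compute the corresponding $a_k$, and verify the inequalities directly by machine.

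The principal obstacle is in the generic case. Beyond simply checking the inequalities, one must pin down the precise sharp threshold and verify \emph{failure} just past the bound by exhibiting a specific coefficient that turns negative---this requires fine control of alternating binomial sums and is where the even/odd split is forced. The free case is essentially algorithmic once Terao's factorization is invoked, with the role of the hypothesis $d\le n$ being to keep the parameter space finite.
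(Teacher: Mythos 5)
Your reduction is correct and matches the paper's: by additivity $\csm(X)=\csm(\Pbb^n)-\csm(M(\cA))$, and with Theorem~\ref{CSMint} this says effectivity is equivalent to $a_k\le\binom{n+1}{k+1}$ for all $k$, where $a_k$ is the coefficient of $t^k$ in $\underline{\CP_\hcA}(t+1)$; this is exactly the paper's combinatorial criterion (Proposition~\ref{effe}, since the polynomial there is $(t+1)^{n+1}-\CP_\hcA(t+1)$). Your treatment of the free case is also the paper's argument verbatim in substance: Terao's factorization, one exponent equal to $1$, sum of exponents equal to $d\le n$, hence a finite list of exponent multisets for $n\le 8$ to be checked by machine. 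That part is fine.

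The gap is in the generic case: you never actually establish the inequalities. You describe a plan (``I would analyze $\binom{n+1}{k+1}-a_k$ as a function of $d$\dots and identify the smallest $d$ at which some inequality first fails''), and you yourself flag the fine control of alternating binomial sums as the principal obstacle, but the statement requires proving that for $d\le n+3$ ($n$ even), resp.\ $d\le n+4$ ($n$ odd), \emph{every} coefficient is nonnegative --- and that verification is absent; the heuristic about the sign of the ``tail'' flipping with the parity of $n-k$ is not a proof of the parity-dependent bound. (Note also that the theorem as stated only asserts effectivity in the given range; failure just past the bound is the paper's stronger Proposition and is not needed here.) The paper sidesteps the alternating-sum analysis entirely: for a generic arrangement $A$ is a simple normal crossings divisor, so $\csm(M(\cA))=\frac{1}{(1+h)^d}\cap\bigl(c(T\Pbb^n)\cap[\Pbb^n]\bigr)$ (Example~\ref{norcro}), whence
\[
\csm(A)=\sum_{k=0}^n\left(\binom{n+1}{k}-(-1)^k\binom{k+d-n-2}{k}\right)h^k\cap[\Pbb^n]\quad,
\]
and the subtracted term $(-1)^k\binom{k+d-n-2}{k}$ is nonpositive for odd $k$ and at most $\binom{k+2}{2}$ (resp.\ $k+1$, resp.\ $1$, resp.\ $0$) for $d=n+4$ (resp.\ $n+3$, $n+2$, $d\le n+1$), so the inequalities $\binom{n+1}{k}\ge(-1)^k\binom{k+d-n-2}{k}$ are immediate to check in the stated range, with the parity of $n$ entering only through whether the extremal index $k=n$ is even. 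To close your argument you would either need to carry out this computation (your $a_k$'s, after substituting the generic characteristic polynomial and shifting $t\mapsto t+1$, reduce to exactly these coefficients) or invoke the normal-crossings formula as the paper does.
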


However, in \S\ref{posifree} we exhibit a free arrangement of degree~$9$ in 
$\Pbb^9$ that has non-effective Chern--Schwartz--MacPherson class. The effectivity 
of the CSM class of an arrangement has a straightforward combinatorial 
interpretation, which we give in Proposition~\ref{effe}.
\medskip

I thank an anonymous referee for valuable comments.


\section{Grothendieck classes}\label{grothsec}

\subsection{}\label{genera}
We work over a  field $k$. For considerations involving Chern--Schwartz--MacPherson 
classes, $k$ will be assumed to be algebraically closed, of characteristic~$0$
(see the comments at the beginning of \S\ref{CSMintro}).

We consider an arrangement $\cA$ of distinct hyperplanes $H_i$ in $\Pbb^n$, 
$i=1,\dots,d$, $d\ge 2$. We also consider the corresponding arrangement
$\hcA$ of hyperplanes $\hH_i\subset V=k^{n+1}$; this has the advantage of being 
{\em central.\/} While we are interested in the projective geometry of $\cA$, basic
definitions and results in the literature are more often given for the associated affine 
arrangement $\hcA$.
We will denote by $A$ and $\hA$, respectively the union of the hyperplanes in $\cA$
and $\hcA$, respectively. We will view~$A$ as a reduced singular hypersurface
of~$\Pbb^n$ of degree~$d\ge 2$.

We will be especially interested in the complements $M(\hcA)=V\smallsetminus \hA$,
$M(\cA)=\Pbb^n\smallsetminus A$. Note that $M(\hcA)$ is a trivial $k^*$-fibration over
$M(\cA)$.

We will denote by $L(\hcA)$ the poset of intersections $\cap_{i\in J} \hH_i$,
partially ordered by reverse inclusion. (For $x,y\in L(\hcA)$ we will write
interchangeably $y\supseteq x$ or $y\le x$ to denote that the subspace
$y$ contains the subspace $x$, i.e., that $y$ precedes $x$ in the poset $L(\hcA)$.)
The space $V$ itself is viewed as 
the intersection over $J=\emptyset$, and is denoted $0\in L(\hcA)$. As $\hcA$
is central, $L(\hcA)$ also has a maximum $1$, corresponding to $\cap_i \hH_i$.
The arrangement is {\em essential\/} if $\cap_i \hH_i$ is the origin; this assumption
will not be needed in this paper.

The {\em M\"obius function\/} of $L(\hcA)$ is defined on pairs $x\le y$ by the
following prescription:
\begin{align*}
\mu(x,x) &= 1 \quad\text{for all $x\in L(\hcA)$} \\
\sum_{x\le z\le y} \mu(x,z) &= 0 \quad\text{for all $x<y$ in $L(\hcA)$.}
\end{align*}
Write $\mu(x)$ for $\mu(0,x)$.
The {\em characteristic polynomial\/} of the arrangement $\hcA$ is
\[
\CP_\hcA(t):=\sum_{x\in L(\hcA)} \mu(x) t^{\dim x}=t^{n+1}+\cdots\quad.
\]
Note that $\CP_\hcA(1)=\sum_{0\le z\le 1} \mu(0,z)=0$. 
The {\em Poincar\'e polynomial\/} of the arrangement~is
\[
\pi_\hcA(t):=\sum_{x\in L(\hcA)} \mu(x) (-t)^{\codim x} = (-t)^{n+1}\cdot \CP_\hcA(-t^{-1})
\quad.
\]

In any extension in which $\CP_\hcA(t)$ factors completely, 
$\CP_\hcA(t)=(t-\alpha_1)\cdots (t-\alpha_{n+1})$, we have
\begin{align*}
\pi_\hcA(t) = (-t)^{n+1}\cdot \CP_\hcA(-t^{-1})
&=(-t)^{n+1} \left(-\frac 1t-\alpha_1\right) \cdots \left(-\frac 1t-\alpha_{n+1}\right) \\
& = (1+\alpha_1 t)\cdots (1+\alpha_{n+1} t)\quad.
\end{align*}
Note that $\pi_{\hcA}(-1)=\CP_{\hcA}(1)=0$: 
we may assume that $\alpha_{n+1}=1$, and we let
\[
\underline{\CP_\hcA}(t)=(t-\alpha_1)\cdots (t-\alpha_n)=\frac{\CP_\hcA(t)}{t-1}
\quad,\quad
\underline{\pi_\hcA}(t)=(1+\alpha_1 t)\cdots (1+\alpha_n t)= \frac{\pi_\hcA(t)}
{1+t}\quad.
\]
Thus, $\underline{\CP_\hcA}(t)$ and $\underline{\pi_\hcA}(t)$ are polynomials of degree 
at most $n$. The results in this paper are most naturally expressed in terms of these
polynomials. As we will see in a moment, 
$\underline{\pi_\hcA}(t)$ has nonnegative coefficients.

\subsection{}
Our first task is the computation of the Grothendieck class of the complement $M(\cA)$
in terms of the characteristic polynomial. We denote by $[X]$ the class of
a $k$-variety $X$ in the Grothendieck ring $K(\Var_k)$ of varieties.
The class of $\Abb^1_k$ is denoted~$\Lbb$.

\begin{theorem}\label{Gclassc}
With notation as above:
\[
[M(\cA)]=\underline{\CP_\hcA}(\Lbb)\quad.
\]
\end{theorem}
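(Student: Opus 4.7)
My plan is to compute $[M(\cA)]$ by stratifying $\Pbb^n$ directly according to the intersection lattice and performing Möbius inversion at the projective level; this will bypass the natural computation in the ambient affine space, which would require dividing by $\Lbb - 1$ in $K(\Var_k)$ — an operation that is in general illegitimate.

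For each $x \in L(\hcA)$ with $\dim x \geq 1$, I will consider the projectivization $\Pbb(x) \subset \Pbb^n$. Setting $L^* := \{x \in L(\hcA) : \dim x \geq 1\}$ and defining the open stratum $\Pbb(x)^\circ := \Pbb(x) \setminus \bigcup_{y > x,\, y \in L^*}\Pbb(y)$, these locally closed subvarieties partition $\Pbb^n$, with $M(\cA)$ the stratum indexed by $x = 0 = V$. The additivity $[\Pbb(x)] = \sum_{y \geq x,\, y \in L^*}[\Pbb(y)^\circ]$ in $K(\Var_k)$ and Möbius inversion on $L^*$ give
\[
[\Pbb(x)^\circ] = \sum_{y \geq x,\, y \in L^*} \mu_{L^*}(x,y)\,[\Pbb(y)].
\]
A short check shows $\mu_{L^*} = \mu|_{L^* \times L^*}$: the recursion for $\mu(x,y)$ uses only elements of the interval $[x,y]$, and every such element contains $y$, hence has dimension $\geq \dim y \geq 1$, so lies in $L^*$. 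Specializing to $x = 0$ and using $[\Pbb(y)] = 1 + \Lbb + \cdots + \Lbb^{\dim y - 1}$ yields
\[
[M(\cA)] = \sum_{y \in L^*} \mu(y) \bigl(1 + \Lbb + \cdots + \Lbb^{\dim y - 1}\bigr).
\]

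To conclude, I will recognize the right-hand side as $\underline{\CP_\hcA}(\Lbb)$ by a purely formal polynomial manipulation: since $\CP_\hcA(1) = 0$,
\[
\underline{\CP_\hcA}(t) = \frac{\CP_\hcA(t)}{t - 1} = \sum_{y \in L(\hcA)} \mu(y)\,\frac{t^{\dim y} - 1}{t - 1} = \sum_{y \in L^*} \mu(y)\bigl(1 + t + \cdots + t^{\dim y - 1}\bigr),
\]
where the $\dim y = 0$ terms vanish because $(t^0 - 1)/(t-1) = 0$. Substituting $t = \Lbb$ finishes the proof. The main obstacle is the temptation to work affinely: one could compute $[M(\hcA)] = \CP_\hcA(\Lbb)$ from the analogous stratification of $V$, then use the globally trivial $k^*$-bundle $M(\hcA) \cong M(\cA) \times k^*$ (via $v \mapsto ([v], \ell_1(v))$ for a choice of hyperplane equation $\ell_1$) to deduce $(\Lbb - 1)[M(\cA)] = (\Lbb - 1)\underline{\CP_\hcA}(\Lbb)$; but cancelling $\Lbb - 1$ is not valid in $K(\Var_k)$ since this element may be a zero divisor. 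Staying projective avoids the issue entirely, and the ancillary identity $\mu_{L^*} = \mu|_{L^*}$ is immediate.
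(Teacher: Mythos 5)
Your proof is correct, and it reaches the statement by a genuinely different route from the paper's. The paper performs the M\"obius inversion in the affine cone: writing $x^\circ=x\smallsetminus \cup_{y>x}y$ for $x\in L(\hcA)$, it obtains $[M(\hcA)]=\sum_x \mu(x)\Lbb^{\dim x}=\CP_\hcA(\Lbb)$ and then descends to $M(\cA)$ via the trivial $k^*$-fibration, i.e.\ from the identity $(\Lbb-1)[M(\cA)]=(\Lbb-1)\underline{\CP_\hcA}(\Lbb)$. You instead stratify $\Pbb^n$ itself by the projectivized elements of the truncated poset $L^*$ and invert there; this costs you two small ancillary checks, both of which you carry out correctly: that $\mu_{L^*}$ agrees with $\mu$ on $L^*$ (your interval argument is exactly right, since every element of $[x,y]$ contains $y$), and the telescoping identity $\underline{\CP_\hcA}(t)=\sum_{y\in L^*}\mu(y)(1+t+\cdots+t^{\dim y-1})$, which uses $\CP_\hcA(1)=0$. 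The trade-off is as you describe: the paper's version is shorter and yields the affine statement $[M(\hcA)]=\CP_\hcA(\Lbb)$ as a byproduct, but its final step amounts to cancelling the factor $\Lbb-1$, which is not known to be a non-zero-divisor in $K(\Var_k)$ (indeed $\Lbb$ itself is a zero divisor over $\Cbb$), whereas your projective stratification never leaves $\Pbb^n$ and so avoids that cancellation altogether. In that respect your argument is the more airtight of the two, and your diagnosis of why one should stay projective is exactly on point.
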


\begin{proof}
For $x\in L(\hcA)$, let
\[
x^\circ = x \smallsetminus \cup_{y> x} y
\]
be the complement of the union of smaller intersections. 
Write $[x]=\Lbb^{\dim x}$, $[x^\circ]$ for the classes in $K(\Var_k)$ of the corresponding
subsets of $V$. Then
\[
[y] = \sum_{x\ge y} [x^\circ]\quad.
\]
Applying M\"obius inversion (\cite{MR1217488}, Proposition~2.39), this gives
\[
[y^\circ] = \sum_{x\subseteq y} \mu(y,x) [x]
\]
and in particular
\[
[M(\hcA)] = [0^\circ] = \sum_{x\in L(\hcA)} \mu(0,x)[x]
=\sum_{x\in L(\hcA)} \mu(x)\Lbb^{\dim x} = \CP_\hcA(\Lbb)\quad.
\]
Since $M(\hcA)$ fibers over $M(\cA)$, with $k^*$ fibers, we have
$[M(\hcA)]=(\Lbb-1) \cdot [M(\cA)]$, and the stated formula follows.
\end{proof}

\subsection{}
If $\cA$ is defined over a finite field $\Fbb_q$, the content of Theorem~\ref{Gclassc}
is that the information carried by the characteristic polynomial of $\hcA$ may be 
recovered from counting points of $A$ over $\Fbb_{q^r}$. This observation is of 
course not new, see \cite{MR1409420} (e.g.~Theorem~2.2) for a thorough study of 
characteristic polynomials of arrangements defined over finite fields.

For complex arrangements, Theorem~\ref{Gclassc} has the following immediate
consequence:

\begin{corol}
With notation as above, the Deligne-Hodge polynomial of the complement
$M(\cA)$ of a complex hyperplane arrangement in $\Pbb^n$ equals
$\underline{\CP_\hcA}(uv)$.
\end{corol}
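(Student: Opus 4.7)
The plan is to apply the Deligne--Hodge polynomial as a ring homomorphism to the identity proved in Theorem~\ref{Gclassc}. Recall that the Deligne--Hodge polynomial (or $E$-polynomial) is defined for every complex variety $X$ by
\[
E(X;u,v)=\sum_{p,q}\left(\sum_k (-1)^k h^{p,q}(H^k_c(X,\Cbb))\right)u^p v^q,
\]
and the fundamental property I intend to use is that $E$ is additive on disjoint unions of locally closed subvarieties and multiplicative on products, hence descends to a \emph{ring homomorphism} $E\colon K(\Var_\Cbb)\to\Zbb[u,v]$. A direct computation on $\Abb^1=\Cbb$ (whose compactly supported cohomology is $\Cbb$ in degree $2$, of pure Hodge type $(1,1)$) gives $E(\Lbb;u,v)=uv$.

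Granting this, the corollary is immediate. By Theorem~\ref{Gclassc},
\[
[M(\cA)]=\underline{\CP_\hcA}(\Lbb)\quad\text{in }K(\Var_\Cbb).
\]
Since $\underline{\CP_\hcA}(t)$ is a polynomial in $t$ with integer coefficients, the fact that $E$ is a ring homomorphism lets me apply it term by term:
\[
E(M(\cA);u,v)=\underline{\CP_\hcA}\bigl(E(\Lbb;u,v)\bigr)=\underline{\CP_\hcA}(uv),
\]
which is the asserted equality.

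There is essentially no obstacle here beyond invoking the existence and standard properties of the $E$-polynomial; in particular, one needs that $E$ is a well-defined ring homomorphism on $K(\Var_\Cbb)$, which is a classical consequence of Deligne's theory of mixed Hodge structures together with the long exact sequence in compactly supported cohomology for an open--closed decomposition. If desired, one can equivalently phrase the argument via the Hodge--Deligne realisation of the virtual Poincar\'e/Hodge polynomial, but the content is the same: apply a ring homomorphism to a universal identity in the Grothendieck ring and use $\Lbb\mapsto uv$.
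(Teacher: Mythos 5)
Your proof is correct and follows exactly the paper's own argument: the Deligne--Hodge polynomial factors through $K(\Var_\Cbb)$ as a ring homomorphism by its additivity and multiplicativity, sends $\Lbb$ to $uv$, and is then applied to the identity of Theorem~\ref{Gclassc}. Nothing is missing; this is the same one-line deduction the paper gives.
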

\noindent The Deligne-Hodge polynomial is the polynomial 
$\sum_{p,q} e^{p,q}(X) u^p v^q$, where 
$e^{p,q}(X)=\sum_k (-1)^k h^{p,q} (H_c^k(X,\Qbb))$. It is determined by the
Grothendieck class of $X$, by its well-known additivity/multiplicativity properties 
(\cite{MR873655}). As the polynomial for $\Lbb$ is $uv$, the statement follows
immediately from Theorem~\ref{Gclassc}.

Taking into account the fact the mixed Hodge structure on the cohomology of 
the complement of a hyperplane arrangement is pure (\cite{MR1131042}), we
obtain the following:

\begin{corol}\label{OSredux}
$\underline{\CP_\hcA}(t)=\sum_{k=0}^n (-1)^{n+k} \rk H_c^{n+k}(M(\cA),\Qbb)\,t^k$.
\end{corol}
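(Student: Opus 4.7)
The plan is to extract the individual ranks $\rk H_c^{n+k}(M(\cA), \Qbb)$ from the Deligne-Hodge polynomial computed in the preceding corollary by exploiting purity. The previous corollary identifies $E(M(\cA); u,v) := \sum_{p,q} e^{p,q}(M(\cA))\,u^pv^q$ with $\underline{\CP_\hcA}(uv)$, a polynomial in the single variable $uv$. Extracting individual Betti numbers from this requires showing that at each Hodge bidegree $(p,q)$ only a single compactly-supported cohomology group contributes, and identifying which one.

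I would invoke purity: by the result cited as \cite{MR1131042}, the Hodge structure on $H^k(M(\cA),\Qbb)$ is pure of type $(k,k)$, so $h^{p,q}(H^k(M(\cA),\Qbb)) = 0$ unless $p=q=k$. Transferring this to compactly-supported cohomology via Poincar\'e duality on the smooth complex $n$-fold $M(\cA)$ gives $h^{p,q}(H_c^j(M(\cA),\Qbb)) = h^{n-p,n-q}(H^{2n-j}(M(\cA),\Qbb))$, which vanishes unless $p=q=j-n$. Using additionally Artin vanishing on the affine variety $M(\cA)$ (which is affine, being the complement of a hypersurface in $\Pbb^n$), the groups $H^j(M(\cA),\Qbb)$ with $j>n$ vanish, hence dually $H_c^j(M(\cA),\Qbb)=0$ for $j<n$. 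In summary, I expect $H_c^{n+k}(M(\cA),\Qbb)$ to be pure of type $(k,k)$ for $0\le k\le n$, with $h^{k,k}(H_c^{n+k}(M(\cA),\Qbb)) = \rk H_c^{n+k}(M(\cA),\Qbb)$, and all other Hodge numbers of compactly-supported cohomology groups to vanish.

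Plugging these vanishings into the definition $e^{p,q}(M(\cA)) = \sum_j (-1)^j h^{p,q}(H_c^j(M(\cA),\Qbb))$ leaves exactly $e^{k,k}(M(\cA)) = (-1)^{n+k}\rk H_c^{n+k}(M(\cA),\Qbb)$ for $0\le k\le n$ and $e^{p,q}(M(\cA))=0$ off the diagonal. Comparing coefficients of $(uv)^k$ on both sides of $\underline{\CP_\hcA}(uv) = \sum_{k=0}^n e^{k,k}(M(\cA))(uv)^k$ and then substituting $t=uv$ yields the stated identity. The main obstacle is essentially bookkeeping: keeping track of the Hodge bigrading and the Tate twist under Poincar\'e duality. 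Beyond that, the result is a direct consequence of the preceding corollary combined with the purity theorem of \cite{MR1131042}.
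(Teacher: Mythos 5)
Your argument is correct and is essentially the paper's own proof: the paper likewise deduces the statement from the Deligne--Hodge polynomial corollary together with purity of $H^k(M(\cA),\Qbb)$ from \cite{MR1131042}, transferred to compactly supported cohomology via Poincar\'e duality so that $H_c^{n+k}$ is pure of type $(k,k)$, and then reads off coefficients of $(uv)^k$. The extra appeal to Artin vanishing for the affine variety $M(\cA)$ is harmless bookkeeping rather than a different route.
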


\noindent (To keep track of weights: the Hodge structures of $H_c^{n+k}$ and $H^{n-k}$
are compatible through the Poincar\'e pairing, see~\cite{MR873655}, \S1.4 (f);
$H^{n-k}$ is of type $(n-k,n-k)$ by~\cite{MR1131042}; thus $H_c^{n+k}$
is of type $(k,k)$.)

As $\rk H_c^{n+k}(M(\cA),\Qbb)=\rk H^{n-k}(M(\cA),\Qbb)$, this statement is
equivalent to
\[
\underline{\pi_\hcA}(t)=\sum_{k=0}^n \rk H^k(M(\cA),\Qbb)\,t^k\quad.
\]
This shows that the coefficients of $\underline{\pi_\hcA}(t)$ are nonnegative. 
Since $M(\hcA)\cong M(\cA)\times k^*$, this also proves
\[
\pi_\hcA(t)=\sum_{k=0}^{n+1} \rk H^k(M(\hcA),\Qbb)\,t^k\quad,
\]
a classic result of Orlik and Solomon (\cite{MR1217488}, Theorem 5.93).
The approach presented here appears particularly straightforward, since it
shows that this result follows directly from Theorem~\ref{Gclassc}, which is 
a trivial consequence of M\"obius inversion.

\subsection{}
Another piece of information that may be derived from the Grothendieck class is the
stable birational equivalence class.
Two projective nonsingular irreducible
varieties $X$, $Y$ are {\em stably birational\/} if $X\times \Pbb^m$
is birational to $Y\times \Pbb^n$ for some $m$, $n$. Stable birational equivalence 
classes of complete nonsingular varieties generate a ring $\Zbb[SB]$, with addition 
defined by disjoint union and multiplication by product.
Every variety (possibly noncomplete or singular) has a well-defined class in 
$\Zbb[SB]$. The ring $\Zbb[SB]$ is defined and studied in \cite{MR1996804}. 

\begin{corol}\label{stableb}
Let $\cA$ be a complex hyperplane arrangement in $\Pbb^n$, and let $A\subset
\Pbb^n$ be the union of its components. 
Then the class of $A$ in $\Zbb[SB]$ equals 
$1-\underline{\CP_{\hcA}}(0)=1-(-1)^n \rk H^n(M(\cA),\Qbb)$.
\end{corol}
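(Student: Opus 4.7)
The plan is to invoke the isomorphism of Larsen--Lunts (\cite{MR1996804}) between $K(\Var_\Cbb)/(\Lbb)$ and $\Zbb[SB]$: the class in the Grothendieck ring modulo the ideal generated by $\Lbb$ recovers the stable birational class. Thus everything reduces to computing $[A]$ modulo $\Lbb$ in $K(\Var_\Cbb)$.

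First I would write $[A] = [\Pbb^n] - [M(\cA)]$. Since $[\Pbb^n] = 1 + \Lbb + \cdots + \Lbb^n$, we have $[\Pbb^n] \equiv 1 \pmod \Lbb$. By Theorem~\ref{Gclassc}, $[M(\cA)] = \underline{\CP_\hcA}(\Lbb)$, which reduces modulo $\Lbb$ to the constant term $\underline{\CP_\hcA}(0)$. Combining the two yields the congruence
\[
[A] \equiv 1 - \underline{\CP_\hcA}(0) \pmod \Lbb
\]
in $K(\Var_\Cbb)$, which under the Larsen--Lunts quotient map is the desired identity in $\Zbb[SB]$.

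It remains to identify $\underline{\CP_\hcA}(0)$ with $(-1)^n \rk H^n(M(\cA),\Qbb)$. For this I would simply specialize the formula of Corollary~\ref{OSredux} at $t=0$: only the $k=0$ term survives, giving $\underline{\CP_\hcA}(0) = (-1)^n \rk H_c^n(M(\cA),\Qbb)$. Poincaré duality (already invoked in the discussion preceding Corollary~\ref{OSredux}) converts $H_c^n$ to $H^n$, completing the identification.

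The only real subtlety is the appeal to the Larsen--Lunts result, which is the core nontrivial input and is exactly what makes the class in $\Zbb[SB]$ extractable from a Grothendieck-class computation; once that is in hand, the argument is a one-line reduction modulo $\Lbb$ together with a substitution $t=0$ in a previously established formula.
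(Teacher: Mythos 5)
Your argument is correct and follows essentially the same route as the paper: the Larsen--Lunts identification $\Zbb[SB]\cong K(\Var)/(\Lbb)$, the reduction $[A]=[\Pbb^n]-[M(\cA)]\equiv 1-\underline{\CP_\hcA}(0)$ obtained by setting $\Lbb=0$ in Theorem~\ref{Gclassc}, and the evaluation of $\underline{\CP_\hcA}(0)$ via Corollary~\ref{OSredux}. No gaps.
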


\begin{proof}
The ring $\Zbb[SB]$ is isomorphic to $K(\Var)/(\Lbb)$ (\cite{MR1996804}, 
Theorem~2.3 and Proposition~2.7). Setting $\Lbb=0$ in Theorem~\ref{Gclassc}
shows that $[M(\cA)]$ is congruent to the constant term of $\underline{\CP_\hcA}(0)$ in
$\Zbb[SB]$. Since $[\Pbb^n]=1$ in $\Zbb[SB]$, it follows that the stable
birational equivalence class of $A$ equals $1-\underline{\CP_\hcA}(0)\in \Zbb
\subseteq \Zbb[SB]$.
The equality $\underline{\CP_\hcA}(0)=(-1)^n \rk H^n(M(\cA),\Qbb)$ follows from
Corollary~\ref{OSredux}.
\end{proof}


\section{Chern--Schwartz--MacPherson classes}\label{CSMsec}

\subsection{}\label{CSMintro}
We now assume the ground field $k$ to be algebraically closed, of characteristic~$0$.
This assumption on the characteristic could likely be relaxed: in any environment
in which resolution of singularities is available, one may define a class satisfying 
inclusion-exclusion and the basic normalization property of 
Chern--Schwartz--MacPherson classes (see~\cite{MR2282409}, Definition~4.4 
and~\S3.3); these are the only tools needed in this section. However, 
characteristic~$0$ is necessary for the main covariance property of these classes
recalled below (see~\cite{MR2282409}, \S5.2), and some results on these classes
are currently only known in characteristic zero. Hence, we prefer to conservatively
adopt this assumption, at the price of limiting the scope of the results. (Many 
interesting arrangements can only be realized in positive characteristic.)

Recall that there is a homomorphism from the group of constructible functions on a variety
$X$ to the Chow group of $X$, $\varphi \mapsto c_*(\varphi)$, covariant with respect
to proper maps and such that $c_*(\one_X)$ equals the total Chern class of the tangent
bundle of~$X$ if $X$ is nonsingular (\cite{MR0361141}; Example~19.1.7 in \cite{85k:14004};
\cite{MR2282409}). The key covariance property of $c_*$ amounts to the fact that if
$\alpha: X\to Y$ is proper, and $\varphi$ is a constructible function, then
$c_*(\alpha_*(\varphi))=\alpha_*(c_*(\varphi))$. Here the push-forward of constructible
functions is defined by taking Euler characteristics of fibers.

We consider this theory over $X=\Pbb^n$. Every subvariety (possibly singular, noncomplete)
$Y\subseteq \Pbb^n$ determines a {\em Chern--Schwartz--MacPherson\/} (CSM) class 
$\csm(Y):=c_*(\one_Y)\in A_*\Pbb^n$.

\begin{theorem}\label{CSM}
Let $\cA$ be a hyperplane arrangement in $\Pbb^n$; denote by $A$ the union of the
hyperplanes in $\cA$, by $M(\cA)$ the complement of $A$ in $\Pbb^n$, and let
$\underline{\CP_\hcA}(t)$ be the polynomial introduced in \S\ref{grothsec}. Then
\[
\csm(M(\cA))=\underline{\CP_\hcA}(t+1)\quad,
\]
where the right-hand side is interpreted as a class in $A_*\Pbb^n$ by replacing
$t^k$ by $[\Pbb^k]$, $k=0,\dots,n$.
\end{theorem}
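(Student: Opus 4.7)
The plan is to parallel the proof of Theorem~\ref{Gclassc} in the Chern--Schwartz--MacPherson setting, using the additivity of $c_*$ on constructible functions in place of additivity in the Grothendieck ring. For $x\in L(\hcA)$ with $\dim x\ge 1$, write $\Pbb(x)\subseteq\Pbb^n$ for the associated linear subspace of dimension $\dim x-1$ (so $\Pbb(0)=\Pbb^n$). On the projective intersection poset $P=\{x\in L(\hcA):\dim x\ge 1\}$ the M\"obius function agrees with that of $L(\hcA)$ at every element, since the only element of $L(\hcA)$ potentially absent from $P$ is the top $1$ (in the essential case), and removing the top does not affect the interval $[0,x]$ for any $x\ne 1$. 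The very same M\"obius-inversion computation as in the proof of Theorem~\ref{Gclassc}, carried out in $\Pbb^n$, then yields the constructible function identity
\[
\one_{M(\cA)}=\sum_{x\in L(\hcA),\,\dim x\ge 1}\mu(x)\,\one_{\Pbb(x)}.
\]

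I would then apply $c_*$. By additivity and the nonsingularity of the subspaces $\Pbb(x)$, the right-hand side becomes $\sum_{x,\,\dim x\ge 1} \mu(x)\,c(T_{\Pbb(x)})\cap[\Pbb(x)]$ in $A_*\Pbb^n$. Using $c(T_{\Pbb^k})=(1+h)^{k+1}$, a direct binomial computation gives
\[
\csm(\Pbb^k)=\sum_{i=0}^k\binom{k+1}{i}[\Pbb^{k-i}]\in A_*\Pbb^n,
\]
which under the substitution $t^j\leftrightarrow[\Pbb^j]$ corresponds to the polynomial $\bigl((1+t)^{k+1}-1\bigr)/t$. Specializing to $k=\dim x-1$ yields the candidate formula
\[
\csm(M(\cA))\;\leftrightarrow\;\sum_{x\in L(\hcA),\,\dim x\ge 1}\mu(x)\,\frac{(1+t)^{\dim x}-1}{t}.
\]

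Since $(1+t)^{\dim x}-1=0$ whenever $\dim x=0$, the sum may be extended harmlessly to all $x\in L(\hcA)$. The numerator then splits as $\sum_{x}\mu(x)(1+t)^{\dim x}-\sum_{x}\mu(x)=\CP_\hcA(1+t)-0$, using the identity $\CP_\hcA(1)=\sum_{x}\mu(x)=0$ recorded in \S\ref{genera}, and we obtain
\[
\csm(M(\cA))\;\leftrightarrow\;\frac{\CP_\hcA(1+t)}{t}=\underline{\CP_\hcA}(t+1),
\]
as required. The main obstacle I foresee is the combinatorial bookkeeping in the first step: verifying that M\"obius inversion on the projective intersection poset is really controlled by the affine M\"obius function $\mu$, and that the sum can be painlessly extended to elements of dimension $0$. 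A conceptually cleaner route is to invoke \cite{MR2504753}, Proposition~2.2: it supplies a dictionary $\Lbb\leftrightarrow 1+t$ between the Grothendieck class of a union of linear subspaces of $\Pbb^n$ and the image of its CSM class in $A_*\Pbb^n$, and applying this dictionary to Theorem~\ref{Gclassc} produces Theorem~\ref{CSM} at once; the $\csm(\Pbb^k)$ calculation above is in effect the verification of this dictionary on the generators $\Lbb^k=[\Abb^k]$.
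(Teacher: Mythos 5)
Your proposal is correct and follows essentially the same route as the paper: the paper's one-line argument is exactly your appeal to Proposition~2.2 of \cite{MR2504753} combined with Theorem~\ref{Gclassc}, and its direct proof is your constructible-function computation (M\"obius inversion giving $\one_{M(\cA)}=\sum_x\mu(x)\one_{\underline x}$, then applying $c_*$ and the Chern class of a linear subspace, assembled there as $h^{n+1}\CP_\hcA(1+\tfrac 1h)\cap[\Pbb^n]$ rather than your $\CP_\hcA(1+t)/t$ under $t^k\leftrightarrow[\Pbb^k]$). The descent step you flag as the main obstacle is dispatched in the paper simply by observing that for positive-dimensional $x$ the functions $\one_x$ on $k^{n+1}$ are pull-backs of $\one_{\underline x}$ from $\Pbb^n$, with $\one_{\underline x}=0$ when $x$ is the origin.
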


This result may be obtained by applying the formula relating CSM classes and 
Grothendieck classes for varieties obtained by elementary set-theoretic operations 
on linear subspaces (Proposition~2.2 in \cite{MR2504753}): the CSM class is 
obtained by viewing the Grothendieck class as a polynomial in $\Tbb=[k^*]$
and replacing $\Tbb^r$ with~$[\Pbb^r]$. Since $\Lbb=\Tbb+1$, 
Theorem~\ref{CSM} follows immediately from Theorem~\ref{Gclassc}.

In more conventional notation, Theorem~\ref{CSM} states the following:
\[
\csm(M(\cA))=\left(h^{n+1} \CP_{\hcA}\left(1+\frac 1h\right)\right)\cap [\Pbb^n]
=\left(h^n \underline{\CP_\hcA}\left(1+\frac 1h\right)\right)\cap [\Pbb^n]
\quad,
\]
where $h$ denotes the hyperplane class in $\Pbb^n$, and the expressions on the
right should be interpreted as the polynomials obtained by expanding them.
For the sake of completeness and ease of reference in later sections, we 
give here a direct proof of these formulas.

\begin{proof}
The second formula is just a restatement of the first one. To obtain the first one,
consider the functions from $L(\hcA)$ to the abelian group of constructible functions
on $k^{n+1}$, defined by $x \mapsto \one_x$, $x\mapsto \one_{x^\circ}$.
(Here $x^\circ$ is as in the proof of Theorem~\ref{Gclassc}.)
We have
\[
\one_y = \sum_{x\ge y} \one_{x^\circ}\quad,
\]
and hence
\[
\one_{y^\circ} = \sum_{x\subseteq y} \mu(y,x) \one_x
\]
by M\"obius inversion. In particular,
\[
\one_{M(\hcA)} =  \sum_{x\in L(\hcA)} \mu(x) \one_x\quad.
\]
For positive dimensional $x$, the characteristic functions $\one_x$ on $k^{n+1}$ are
pull-backs of the corresponding characteristic functions from $\Pbb^n$. It follows that
\[
\one_{M(\cA)} =  \sum_{x\in L(\hcA)} \mu(x) \one_{\underline x}
\]
on $\Pbb^n$, where $\underline x$ denotes the projective subspace corresponding to
$x\subset V$ (and $\underline x=\emptyset$ if $x$ is the origin in $V$).
Applying MacPherson's natural transformation, this shows that
\[
\csm(M(\cA)) =  \sum_{x\in L(\hcA)} \mu(x) \csm(\underline x)\in A_*\Pbb^n\quad.
\]
Now $\underline x\cong \Pbb^{\dim x-1}$, and hence
\[
\csm(\underline x) = (1+h)^{\dim x} h^{n+1-\dim x}\cap [\Pbb^n]
\]
as a class in $\Pbb^n$.
(In particular $\csm(\underline x)=0$ if $x$ is the origin in $V$, as it should, since
$h^{n+1}\cap [\Pbb^n]=0$.) It follows that
\[
\csm(M(\cA)) = \left(h^{n+1} \sum_{x\in L(\hcA)} \mu(x) \frac{(1+h)^{\dim x}}{h^{\dim x}}
\right) \cap [\Pbb^n]
=\left(h^{n+1} \CP_{\hcA}\left(1+\frac 1h\right)\right) \cap [\Pbb^n]\quad,
\]
as stated.
\end{proof}

\begin{corol}\label{poinc}
With the same notation:
\[
\csm(M(\cA))=(1+h)^n \underline{\pi_\hcA}\left(\frac {-h}{1+h}\right)\cap [\Pbb^n]
=\pi_{\hcA}\left(\frac {-h}{1+h}\right)\cap \left(c(T\Pbb^n)\cap [\Pbb^n]\right)\quad.
\]
\end{corol}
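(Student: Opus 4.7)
The proof is essentially a routine algebraic manipulation starting from Theorem~\ref{CSM}, so my plan is to reduce the claimed identities to a polynomial identity in a formal variable $h$ and then verify it via the factored forms of $\CP_\hcA$ and $\pi_\hcA$ recorded in \S\ref{genera}.

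First I would recall the version of Theorem~\ref{CSM} in the equivalent form
\[
\csm(M(\cA))=\left(h^n\,\underline{\CP_\hcA}\!\left(1+\tfrac{1}{h}\right)\right)\cap[\Pbb^n].
\]
Using the factorization $\underline{\CP_\hcA}(t)=(t-\alpha_1)\cdots(t-\alpha_n)$ (with $\alpha_{n+1}=1$ having been divided out), a direct expansion gives
\[
h^n\,\underline{\CP_\hcA}\!\left(1+\tfrac{1}{h}\right)
=\prod_{i=1}^n\bigl(1+(1-\alpha_i)h\bigr).
\]
On the other hand, from $\underline{\pi_\hcA}(t)=(1+\alpha_1 t)\cdots(1+\alpha_n t)$ one computes in the same way
\[
(1+h)^n\,\underline{\pi_\hcA}\!\left(\tfrac{-h}{1+h}\right)
=\prod_{i=1}^n\bigl(1+(1-\alpha_i)h\bigr).
\]
Comparing these two expressions yields the first equality of the corollary. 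Since everything takes place formally in $\Zbb[h]$ modulo $h^{n+1}$, no convergence issue arises.

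For the second equality I would use the relation $\pi_\hcA(t)=(1+t)\,\underline{\pi_\hcA}(t)$ established in \S\ref{genera}. Substituting $t=-h/(1+h)$ gives $1+t=1/(1+h)$, so
\[
\pi_{\hcA}\!\left(\tfrac{-h}{1+h}\right)=\frac{1}{1+h}\,\underline{\pi_\hcA}\!\left(\tfrac{-h}{1+h}\right).
\]
Combining this with the standard identity $c(T\Pbb^n)\cap[\Pbb^n]=(1+h)^{n+1}\cap[\Pbb^n]$ shows
\[
\pi_{\hcA}\!\left(\tfrac{-h}{1+h}\right)\cap\bigl(c(T\Pbb^n)\cap[\Pbb^n]\bigr)
=(1+h)^n\,\underline{\pi_\hcA}\!\left(\tfrac{-h}{1+h}\right)\cap[\Pbb^n],
\]
which matches the middle expression and completes the corollary.

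There is no real obstacle; the only point requiring care is bookkeeping the factor $t-1$ (resp.~$1+t$) that distinguishes $\CP_\hcA$ and $\underline{\CP_\hcA}$ (resp.~$\pi_\hcA$ and $\underline{\pi_\hcA}$), so that the Chern class $c(T\Pbb^n)=(1+h)^{n+1}$ is correctly accounted for in the passage between the two forms. Once the factorizations are invoked, the identity of the two polynomials in $h$ is immediate.
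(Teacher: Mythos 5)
Your proposal is correct and follows exactly the route the paper intends: the corollary is stated without proof as an immediate consequence of Theorem~\ref{CSM} together with the factorizations $\underline{\CP_\hcA}(t)=\prod_i(t-\alpha_i)$ and $\underline{\pi_\hcA}(t)=\prod_i(1+\alpha_i t)$ from \S\ref{genera}, and your reduction of both sides to $\prod_i\bigl(1+(1-\alpha_i)h\bigr)$, followed by the observation that $1+t=1/(1+h)$ under $t=-h/(1+h)$ and $c(T\Pbb^n)=(1+h)^{n+1}$, is precisely that argument. (Indeed, since $\underline{\pi_\hcA}$ has degree at most $n$, both sides are honest polynomials of degree at most $n$ in $h$, so even the caveat about working modulo $h^{n+1}$ is not needed.)
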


\begin{example}\label{norcro}
Suppose $\cA$ is {\em generic,\/} i.e., it consists of $d$ hyperplanes meeting with 
normal crossings. Then
\[
\csm(M(\cA))=c(\Omega(\log A)^\vee)\cap [A]=\frac 1{(1+h)^d}\cap
(c(T\Pbb^n)\cap [\Pbb^n])\quad,
\]
see e.g., Theorem~1 in \cite{MR2001d:14008}.
By Corollary~\ref{poinc}, then:
\[
\pi_{\hcA}\left(\frac {-h}{1+h}\right)=\frac 1{(1+h)^d}
\]
modulo $h^{n+1}$. Setting $t=\frac {-h}{1+h}$, i.e., $h=\frac {-t}{1+t}$, gives
\[
\pi_{\hcA}(t)\equiv (1+t)^d\mod t^{n+1}\quad.
\]
The coefficient of $t^{n+1}$ in $\pi_{\hcA}(t)$ is then determined by the fact
that $\pi_{\hcA}(-1)=0$.

For instance, $\pi_{\hcA}(t)=(1+t)^d$ for $d\le n+1$.
The {\em Boolean arrangement,\/} where $\hcA$ consists of the $n+1$ coordinate
hyperplanes in $V\cong k^{n+1}$, is of this type; cf.~\S2.3 in \cite{MR1217488}.
\qede\end{example}


\section{Free arrangements}\label{free}

\subsection{}
The key feature of Example~\ref{norcro} is the (well known) fact that if $D$ is a divisor 
with normal crossings
in a nonsingular variety, then the bundle $\Omega^1(\log D)$ is locally free and its
total Chern class computes the Chern--Schwartz--MacPherson class of the complement
of $D$. Thus, for normal crossings arrangements the left-hand side of the formulas 
in Theorem~\ref{CSM} and Corollary~\ref{poinc} may be viewed as the Chern class
of a bundle, and the formulas may be interpreted as an alternative computation of 
this class.

In this section we show that this interpretation extends to {\em free arrangements,\/}
by which we mean projective arrangements $\cA$ such that the corresponding
affine arrangements $\hcA$ are free in the sense of \cite{MR1217488}.
For these arrangements, the sheaf $\Omega^1_{\Pbb^n}(\log A)$ of differential 
$1$-forms with logarithmic poles along the union $A$ of the hyperplanes in $\cA$ 
is locally free. We will prove:

\begin{theorem}\label{freethm}
For free arrangements $\cA$ in $\Pbb^n$,
\begin{equation*}
\tag{$\dagger$}
\csm(M(\cA))=c(\Omega^1_{\Pbb^n}(\log A)^\vee)\cap [\Pbb^n]\quad.
\end{equation*}
\end{theorem}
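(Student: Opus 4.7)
The strategy is to evaluate both sides of~$(\dagger)$ in terms of the exponents of the free arrangement, matching the output of Theorem~\ref{CSM} against the \mustata-Schenck computation of the Chern class of the logarithmic tangent sheaf.

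For the left-hand side, I apply Theorem~\ref{CSM} in its explicit form $\csm(M(\cA))=\bigl(h^n\underline{\CP_\hcA}(1+1/h)\bigr)\cap [\Pbb^n]$. Let $(b_1,\ldots,b_{n+1})$ be the exponents of the free central arrangement $\hcA$, normalized so that $b_1=1$ is the exponent contributed by the Euler derivation. Terao's factorization theorem (\cite{MR1217488}, Theorem~4.137) then gives
\[
\CP_\hcA(t)=\prod_{i=1}^{n+1}(t-b_i),\qquad \underline{\CP_\hcA}(t)=\prod_{i=2}^{n+1}(t-b_i),
\]
and a direct substitution yields
\[
\csm(M(\cA))=\prod_{i=2}^{n+1}\bigl(1+(1-b_i)h\bigr)\cap [\Pbb^n].
\]

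For the right-hand side, I invoke \mustata-Schenck~\cite{MR1843320}: for a free arrangement, Saito's criterion globalizes to a short exact sequence on $\Pbb^n$ of the form
\[
0\to \cO_{\Pbb^n} \to \bigoplus_{i=1}^{n+1}\cO_{\Pbb^n}(1-b_i)\to T_{\Pbb^n}(-\log A)\to 0,
\]
in which the first arrow is induced by the Euler derivation. Multiplicativity of the total Chern class in exact sequences, together with the fact that $b_1=1$ makes the first factor trivial, gives
\[
c(\Omega^1_{\Pbb^n}(\log A)^\vee)=c(T_{\Pbb^n}(-\log A))=\prod_{i=2}^{n+1}\bigl(1+(1-b_i)h\bigr),
\]
matching the expression obtained above and establishing~$(\dagger)$.

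The main obstacle---and the essential input from outside Theorem~\ref{CSM}---is the \mustata-Schenck exact sequence, which encodes the geometric content of freeness: freeness of $\hcA$ means that the derivation module $D(\hcA)$ is graded-free with generators of polynomial degrees $b_i$, and Saito's criterion assembles these into the locally free resolution of $T_{\Pbb^n}(-\log A)$ displayed above. For non-free arrangements $\Omega^1_{\Pbb^n}(\log A)$ is only coherent, and no such linear factorization of its Chern class is available; this is precisely why the theorem is restricted to free arrangements. The Boolean case ($b_i=1$ for all $i$, and $A$ a simple normal crossing divisor of degree $n+1$) gives a sanity check: both sides reduce to $[\Pbb^n]$.
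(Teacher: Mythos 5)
Your proof is correct and follows essentially the same route as the paper's: compute the left-hand side from Theorem~\ref{CSM} and the right-hand side from the \mustata--Schenck description of the logarithmic sheaf, with an Euler-type exact sequence bridging the graded module on the affine cone and $\Omega^1_{\Pbb^n}(\log A)$. Three small points of comparison. First, your exact sequence is precisely the dual of the paper's Lemma~\ref{exseq}, with the middle term split into line bundles; it is the one genuinely geometric input, and you assert it rather than prove it --- the paper derives it from the Euler derivation and the standard presentation $\cO_{\Pbb^n}(-1)^{\oplus(n+1)}\to\cO_{\Pbb^n}$ of $\Omega^1_{\Pbb^n}$, and since the sequence in this exact form is not literally Theorem~4.1 of \cite{MR1843320}, you should either reproduce that short argument or cite it precisely. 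Second, you invoke Terao's factorization to identify the roots of $\underline{\CP_\hcA}$ with the integer exponents $b_i$; the paper notes in a footnote that this is unnecessary --- one may factor over an extension of $\Qbb$ and use only the Chern class identity $c(\Til{\Omega^1})=\pi_\hcA(h)$, which also covers the broader locally free case treated by \mustata{} and Schenck. Third, Theorem~\ref{MusSch} is stated for essential arrangements, and the paper spends the first half of its proof reducing to that case by a cone argument; your variant, resting on the splitting of the free module of derivations and on Terao's theorem (both valid for arbitrary free central arrangements), legitimately sidesteps that reduction, which is a modest simplification. With the exact sequence justified, the computation matching $\prod_{i\ge 2}\bigl(1+(1-b_i)h\bigr)$ on the two sides is complete and correct.
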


We recall that sections of the logarithmic sheaf $\Omega^1_{\Pbb^n}(\log A)$ 
are differential forms~$\omega$ such that both $f\omega$ and $f d\omega$ are
regular, where $f=0$ is the equation for $A$. This definition, due to Saito, 
generalizes Deligne's definition for normal crossing divisors. In general, a divisor 
$D$ of a variety $X$ is said to be `free' if this sheaf is locally free.

\subsection{}
Taken together, Theorem~\ref{CSM} and Theorem~\ref{freethm} give a relation
between the characteristic polynomial of an arrangement $\cA$ and the 
total Chern class 
of $\Omega^1_{\Pbb^n}(\log A)$ in the case of free arrangements. 
Theorem~\ref{CSM} may be viewed as a generalization of this relation, 
as it holds for arbitrary projective arrangements---notwithstanding the fact that its
proof is completely trivial, while the proof of the particular case of free arrangements 
in the form of Theorem~\ref{freethm} requires some actual work. By our good fortune, 
the main ingredient in this work may be found in a paper of M.~\mustata{} and 
H.~Schenck; Theorem~\ref{freethm} will be obtained as a consequence of Theorem~4.1
in \cite{MR1843320}. 

This is one instance in which MacPherson's functorial theory of Chern 
classes clearly provides the `right' generalization of Chern classes to noncomplete 
(and/or singular) varieties, and it is tempting to guess that the equality in 
Theorem~\ref{freethm} may hold for more general free divisors. Work of Xia Liao
(\cite{Liao}) shows that for a reduced curve $X$ in a nonsingular surface $V$, the 
Chern--Schwartz--MacPherson class of the complement equals the Chern 
class of the corresponding bundle of logarithmic derivations only if the Milnor
and Tjurina numbers of the singularities of $X$ agree. This indicates that a hypothesis
of local quasi-homogeneity is likely necessary for a generalization of 
Theorem~\ref{freethm}; we conjecture as much in the introduction.

In any case, the question of comparing the CSM 
class of the complement of a divisor $D$ and the Chern classes of the corresponding 
sheaf of differential forms with logarithmic poles along $D$ appears to be interesting 
and approachable. Theorem~5.13 in \cite{denhamschulze} indicates that a correction
term will be necessary if this sheaf is not locally free, as it provides such a term for
the corresponding generalization of Theorem~4.1 in \cite{MR1843320}
to the `locally tame' case.

\subsection{}\label{prelim}
The formula in Theorem~\ref{freethm} holds if $A$ is any divisor with simple 
normal crossings in a nonsingular variety. For a proof of this elementary fact, 
see e.g., Theorem~1 in \cite{MR2001d:14008} or Proposition 15.3 in \cite{MR1893006};
this observation may in fact be used to give an alternative treatment of CSM classes
(\cite{MR2282409}). The obvious strategy to prove Theorem~\ref{freethm} would 
therefore be to apply resolution of singularities and reduce to the case of normal crossing 
divisors. The behavior of the left-hand side of $(\dagger)$ through a resolution is 
controlled by the covariance of CSM classes:

\begin{lemma}\label{trilem}
Let $X$ be a variety, and let $Y\subseteq X$ be a subscheme. Let $\rho: \Til X \to X$
be a proper map, and $Y'\subseteq \Til X$ any subscheme such that $\rho$ restricts 
to an isomorphism of the complements $M(Y')$ of $Y'$ in $\Til X$ and $M(Y)$ of 
$Y$ in $X$. Then 
\[
\csm(M(Y))=\rho_* \csm(M(Y'))\quad.
\]
\end{lemma}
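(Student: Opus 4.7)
My plan is to reduce the claim to a pointwise computation of the push-forward of a constructible function, via the covariance property of the MacPherson transformation $c_*$ recalled in \S\ref{CSMintro}. By that covariance, applied to the proper map $\rho$ and the constructible function $\one_{M(Y')}$,
\[
\rho_* \csm(M(Y')) = \rho_* c_*(\one_{M(Y')}) = c_*\bigl(\rho_* \one_{M(Y')}\bigr).
\]
So it suffices to show that $\rho_* \one_{M(Y')} = \one_{M(Y)}$ as constructible functions on $X$; then the right-hand side equals $c_*(\one_{M(Y)})=\csm(M(Y))$, as desired.

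To verify the identity $\rho_* \one_{M(Y')} = \one_{M(Y)}$, I would evaluate both sides at an arbitrary point $p\in X$, using the definition of push-forward of constructible functions as Euler characteristic of the fiber:
\[
(\rho_* \one_{M(Y')})(p) = \CP\bigl(\rho^{-1}(p)\cap M(Y')\bigr).
\]
If $p\in M(Y)$, then the hypothesis that $\rho$ restricts to an isomorphism $M(Y')\to M(Y)$ implies that $\rho^{-1}(p)\cap M(Y')$ is a single reduced point, with Euler characteristic $1=\one_{M(Y)}(p)$. If instead $p\in Y$, then any point $q\in M(Y')$ satisfies $\rho(q)\in\rho(M(Y'))=M(Y)$, so $\rho(q)\neq p$; hence $\rho^{-1}(p)\cap M(Y')=\emptyset$, with Euler characteristic $0=\one_{M(Y)}(p)$. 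This establishes the pointwise equality, and combining with the covariance identity above concludes the proof.

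There is essentially no obstacle: the lemma is a direct unpacking of covariance together with the observation that under the stated hypothesis the two complements are in bijection over each point of $X$. The only point one must be mildly careful about is that $\rho^{-1}(Y)$ may well be strictly larger than $Y'$, but this does not matter because we are only asked to take Euler characteristics of the intersection $\rho^{-1}(p)\cap M(Y')$, which is forced to be empty whenever $p\in Y$ simply because $\rho(M(Y'))\subseteq M(Y)$.
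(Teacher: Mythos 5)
Your proposal is correct and is essentially the paper's own argument: the paper simply asserts that $\rho_*(\one_{M(Y')})=\one_{M(Y)}$ under the stated hypotheses and invokes covariance of $c_*$, while you spell out the pointwise fiberwise Euler-characteristic verification of that identity. Nothing is missing and nothing differs in substance.
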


\begin{proof}
Under the hypotheses specified in the statement, $\rho_*(\one_{M(Y')})=\one_{M(Y)}$,
and the equality follows then immediately from the covariance property of 
Chern--Schwartz--MacPherson classes (recalled in \S\ref{CSMintro}).
\end{proof}

Lemma~\ref{trilem} reduces the proof of Theorem~\ref{freethm} to verifying that
the Chern class of the bundle of differential forms with logarithmic poles is preserved
under push-forward for suitable blow-ups. The difficulty lies in the fact that the 
bundle itself is {\em not\/} preserved under blow-ups. 
However, Silvotti (\cite{MR1484696}) has analyzed the behavior of the logarithmic 
bundle under blow-ups in the case of arrangements, and we feel that his analysis
should suffice in order to obtain a proof of Theorem~\ref{freethm}.

In fact, Silvotti proves (\cite{MR1484696}, Proposition~4.5) that if 
$\Omega^1_X(\log D)$ splits as a direct sum of line bundles $\cL_1,\dots,\cL_n$, 
then the corresponding bundle $\Omega^1_{\decor X}(\log \decor D)$ in the blow-up
along a subvariety~$Y$ also splits, and in fact
\[
\Omega^1_{\decor X}(\log \decor D) \cong
(\sigma^*\cL_1 \otimes (-\mu_1 E)) \oplus \cdots \oplus 
(\sigma^*\cL_n \otimes (-\mu_n E)) 
\]
for non-negative integers $\mu_1,\dots,\mu_n$, where $E$ denotes the exceptional
divisor. (Terao proved that the splitting does occur in the case of free hyperplane 
arrangements, cf.~Proposition~5.1 in \cite{MR1484696}.) 
A proof of Theorem~\ref{freethm} follows easily if one could show that
{\em at most $\codim Y-1$ of the numbers $\mu_i$ are nonzero.\/}

In fact, given that Theorem~\ref{freethm} does hold, it seems that this must indeed 
be the case. It would be nice to have a direct proof of this fact.


\subsection{}\label{compa}
Theorem~4.1 in~\cite{MR1843320} provides us with an alternative approach to
Theorem~\ref{freethm}. Denote by $\Omega^1$ the {\em module\/} of differential
forms with logarithmic poles along the central arrangement $\hcA$ in $k^{n+1}$.
This is a graded module, hence it defines a coherent sheaf $\Til{\Omega^1}$ on
$\Pbb^n$. Under the assumption that the arrangement is free, $\Til{\Omega^1}$
is a rank-$(n+1)$ locally free sheaf on $\Pbb^n$. 

\begin{theorem}[\mustata-Schenck]\label{MusSch}
If $\hcA$ is an essential free arrangement, then
\[
c(\Til{\Omega^1})=\pi_\hcA (h)\quad,
\]
where $h=c_1(\cO_{\Pbb^n}(1))$.
\end{theorem}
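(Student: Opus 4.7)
The plan is to exploit the freeness hypothesis to split $\Til{\Omega^1}$ as a direct sum of line bundles on $\Pbb^n$, and then to match the resulting factored total Chern class with $\pi_\hcA(h)$ by invoking Terao's classical factorization theorem for the characteristic polynomial of a free arrangement.

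First I would use the freeness assumption to produce a graded decomposition of $\Omega^1$. By Saito's criterion (and the very definition of a free arrangement) the module of logarithmic derivations $D(\hcA)$ is a free graded $S$-module, and by the duality between $D(\hcA)$ and $\Omega^1(\hcA)$ so is $\Omega^1(\hcA)$. With the grading convention in which $dx_j$ has degree $0$ (so that the Saito pairing $D(\hcA)\times \Omega^1(\hcA) \to S$ is of total degree $0$), if $e_1,\dots,e_{n+1}$ denote the exponents of $\hcA$ --- i.e., the degrees of a homogeneous basis of $D(\hcA)$ --- then one obtains a graded isomorphism
$$\Omega^1(\hcA) \cong \bigoplus_{i=1}^{n+1} S(e_i).$$
Sheafifying on $\Pbb^n$ yields $\Til{\Omega^1}\cong \bigoplus_{i=1}^{n+1} \cO_{\Pbb^n}(e_i)$, and since the Chern class of a direct sum of line bundles is the product of the Chern classes of the summands,
$$c(\Til{\Omega^1}) = \prod_{i=1}^{n+1}(1+e_i h).$$

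Next I would invoke Terao's factorization theorem, which states that for any free central arrangement with exponents $e_1,\dots,e_{n+1}$,
$$\pi_\hcA(t) = \prod_{i=1}^{n+1}(1+e_i t).$$
Setting $t=h$ and comparing with the previous display gives $c(\Til{\Omega^1}) = \pi_\hcA(h)$, as claimed. As a sanity check one may verify this on the Boolean arrangement (where all $e_i=1$, giving $\Til{\Omega^1}\cong \cO(1)^{n+1}$ and $\pi_\hcA(h)=(1+h)^{n+1}$), and on a generic central arrangement of $d$ lines in $k^2$ (exponents $(1,d-1)$, yielding $(1+h)(1+(d-1)h)$ on both sides).

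The main obstacle is verifying the precise graded decomposition $\Omega^1(\hcA)\cong \bigoplus S(e_i)$ with the correct degree shifts: this requires careful bookkeeping of the grading conventions on differential forms and of the degree shift built into the Saito duality between $D(\hcA)$ and $\Omega^1(\hcA)$. The hypothesis that $\hcA$ is essential enters precisely here, to guarantee that both modules have rank $n+1$ and that the relation $\sum_i e_i = d$ holds, pinning down the correspondence between the twists appearing in the splitting of $\Til{\Omega^1}$ and the exponents feeding into $\pi_\hcA$.
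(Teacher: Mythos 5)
Your argument is correct for the statement as printed, but note first that the paper does not prove this theorem at all: it is quoted from \mustata{} and Schenck (Theorem~4.1 of \cite{MR1843320}) and used as a black box. What you give is the standard short proof of the \emph{globally} free case: freeness of $D(\hcA)$ together with the perfect degree-zero pairing with $\Omega^1$ gives $\Til{\Omega^1}\cong\bigoplus_i\cO_{\Pbb^n}(e_i)$ (your grading bookkeeping is consistent with the paper's conventions; compare Lemma~\ref{exseq}, whose Euler sequence forces $\deg dx_j=0$ and hence the twists $+e_i$ rather than some shift of them), and Terao's factorization theorem then identifies $\prod_i(1+e_ih)$ with $\pi_\hcA(h)$ modulo $h^{n+1}$. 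The real content of the cited theorem, and the reason the paper defers to \cite{MR1843320}, is that the equality holds under the weaker hypothesis that $\hcA$ is only \emph{locally} free: then $\Til{\Omega^1}$ is still a vector bundle but need not split, Terao's factorization is unavailable, and \mustata{} and Schenck must extract the Chern classes from the graded module by homological means. Since the paper states and uses only the free case, your splitting argument is a legitimate, self-contained, and more elementary substitute for the citation. Two small corrections to your closing remarks: essentialness is not actually what your proof needs --- $D(\hcA)$ has rank $n+1$ and $\sum_ie_i=d$ for any free central arrangement --- and the matching of the twists $e_i$ with the exponents entering $\pi_\hcA$ requires no extra work, since both are by definition the degrees of a homogeneous basis of $D(\hcA)$.
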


\begin{remark}
In fact, \mustata{} and Schenck prove the equality in Theorem~\ref{MusSch}
under the weaker hypothesis that the arrangement is {\em locally\/} free.
Also, we note that while the statement of our Theorem~\ref{freethm} assumes 
the ground field to be algebraically closed of characteristic~$0$,
this assumption is not needed in the result of \mustata{} and Schenck.

In Theorem~\ref{MusSch}, the right-hand side should be parsed as the truncation
of the Poincar\'e polynomial modulo $h^{n+1}$, as $h^{n+1}=0$ in $\Pbb^n$.
Also, we recall that `essential' means that the intersection of all hyperplanes in 
the projective arrangement $\cA$ is empty.
\qede
\end{remark}

We now prove Theorem~\ref{freethm} as a corollary of Theorems~\ref{CSM}
and~\ref{MusSch}.

\begin{lemma}\label{exseq}
Let $\cA$ be a free arrangement. Then there is an exact sequence
\[
\xymatrix{
0 \ar[r] & \Omega^1_{\Pbb^n}(\log A) \ar[r] &
\Til{\Omega^1}\otimes \cO_{\Pbb^n}(-1) \ar[r] & \cO_{\Pbb^n} \ar[r] & 0\quad.
}
\]
\end{lemma}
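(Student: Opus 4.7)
The plan is to model the proof on the classical Euler sequence $0 \to \Omega^1_{\Pbb^n} \to \cO(-1)^{n+1} \to \cO_{\Pbb^n} \to 0$, replacing ordinary differential forms with Saito's logarithmic forms along $\hcA$. The key input is that since $\hcA$ is central, the Euler derivation $\theta_E = \sum_{i=0}^n x_i \partial_{x_i}$ is a logarithmic derivation along $\hA$, and contraction with $\theta_E$ will serve as the map $\Til{\Omega^1}(-1) \to \cO_{\Pbb^n}$.

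At the module level, $\omega \mapsto \iota_{\theta_E} \omega$ gives a homomorphism of graded $S$-modules $\Omega^1 \to S$, which sheafifies to the desired map after the appropriate Serre twist (dictated by the grading convention for $\Omega^1$ used in \S\ref{compa}, under which $\Til{\Omega^1}$ has Chern class $\pi_\hcA(h)$). Surjectivity is immediate: for each linear form $\ell_i$ defining a hyperplane of $\cA$, one has $\iota_{\theta_E}(d\ell_i/\ell_i) = 1$, so the constant section $1 \in H^0(\Pbb^n, \cO_{\Pbb^n})$ already lies in the image. The substantive step is to identify the kernel with $\Omega^1_{\Pbb^n}(\log A)$; the plan is to use the correspondence between sheaves on $\Pbb^n$ and $k^*$-equivariant sheaves on $V \setminus \{0\}$. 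A form on $V \setminus \{0\}$ descends to $\Pbb^n$ exactly when it is $k^*$-invariant of the correct weight and annihilated by $\iota_{\theta_E}$, and because $\hcA$ is central the graded module structure on $\Omega^1$ already encodes these $k^*$-weights.

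The main obstacle lies in the concluding local verification: on each standard affine chart $D_+(x_j) \subset \Pbb^n$, one must check that the degree-$0$ logarithmic forms in $\Omega^1[x_j^{-1}]$ annihilated by $\iota_{\theta_E}$ agree with the local sections of $\Omega^1_{\Pbb^n}(\log A)$ on that chart. This amounts to tracking how the defining equations $\ell_i$ dehomogenize (with care for hyperplanes containing $V(x_j)$) and matching Saito's regularity conditions on $f\omega$, $f d\omega$ on $V$ along $\hA$ with their counterparts on $\Pbb^n$ along $A$. The computation is delicate but introduces no new conceptual ingredient beyond the ordinary Euler paradigm. I remark that freeness of $\cA$ enters only in guaranteeing that $\Til{\Omega^1}$ is locally free; the exactness of the sequence itself should hold for any central arrangement, and this freeness is what will eventually allow the sequence to be used to compute Chern classes in the proof of Theorem~\ref{freethm}.
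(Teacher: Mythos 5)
Your proposal is correct and follows essentially the same route as the paper: the paper also uses contraction with the Euler derivation to define the epimorphism $\Til{\Omega^1}(-1)\to\cO_{\Pbb^n}$ (citing Proposition~4.27 of Orlik--Terao for the dual statement), views it as the natural extension of the standard Euler-sequence epimorphism $\cO_{\Pbb^n}(-1)^{\oplus(n+1)}\to\cO_{\Pbb^n}$, and identifies the kernel with the meromorphic forms satisfying Saito's conditions, i.e.\ with $\Omega^1_{\Pbb^n}(\log A)$. Your extra remarks (explicit surjectivity via $\iota_{\theta_E}(d\ell_i/\ell_i)=1$, and the observation that freeness is only needed for local freeness of $\Til{\Omega^1}$, not for exactness) are accurate refinements of the same argument.
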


\begin{proof}
The Euler derivation 
$x_0 \frac{\partial}{\partial x_0} + \cdots + x_n \frac{\partial}{\partial x_n}$ defines 
an epimorphism $\Til{\Omega^1} \to \cO_{\Pbb^n} (1)$ (cf.~Proposition~4.27 in
\cite{MR1217488} for the dual statement). The shifted epimorphism
$\Til{\Omega^1}(-1) \to \cO_{\Pbb^n}$ is the natural extension of the standard 
epimorphism $\cO_{\Pbb^n}(-1)^{\oplus(n+1)} \to \cO_{\Pbb^n}$
whose kernel defines the sheaf of differential forms over $\Pbb^n$ (as in 
\cite{MR0463157}, II.8.13). The kernel of this epimorphism is then the sheaf
of meromorphic differential forms satisfying the same conditions as the forms
in $\Omega^1$, and this is the definition of $\Omega^1_{\Pbb^n}(\log A)$.
\end{proof}

\begin{proof}[Proof of Theorem~\ref{freethm}]
First, we note that we may assume the arrangement to be essential. Indeed,
every arrangement $\cA$ in $\Pbb^n$ is a cone over an essential arrangement 
$\cA'$ in $\Pbb^{n-k}$, for some $k\ge 0$. Assume the formula ($\dagger$)
in Theorem~\ref{freethm} is known for $\cA'$:
\[
\csm(M(\cA'))=c(\Omega^1_{\Pbb^{n-k}}(\log A')^\vee)\cap [\Pbb^{n-k}]\quad.
\]
Inductively, it suffices to show that the formula for $\cA$ follows from this for $k=1$.
Write $\csm(M(\cA'))=g(h)\cap [\Pbb^{n-1}]$, for a polynomial $g$ of degree $\le n-1$;
we are assuming that $c(\Omega^1_{\Pbb^{n-1}}(\log A')^\vee)=g(h)$.
We have $\csm(A')=f(h)\cap [\Pbb^{n-1}]$ for $f(h)=(1+h)^n-h^n-g(h)$, hence
by Proposition~5.2 in \cite{MR2504753}
\[
\csm(A) = (1+h) f(h)\cap [\Pbb^n] + [\Pbb^0]\quad,
\]
and therefore
\begin{align*}
\csm(M(\cA))&=\csm(\Pbb^n)-\csm(A) \\
&=\big(((1+h)^{n+1}-h^{n+1})-((1+h) ((1+h)^n-h^n-g(h)) +h^n)\big) \cap [\Pbb^n] \\
&=(1+h) g(h) \cap [\Pbb^n]
\end{align*}
\noindent (The more combinatorially minded reader may reach the same conclusion 
as a consequence of Theorem~\ref{CSM}.)
On the other hand, by Lemma~\ref{exseq} we have
\[
c(\Omega^1_{\Pbb^n}(\log A')) = c(\Til {{\Omega'}^1}(-1))
\]
where ${\Omega'}^1$ is the module of differentials with logarithmic poles along $\cA'$.
Under the assumption that $\cA$ is free so is $\cA'$, and $\Omega^1={\Omega'}^1
\oplus k$. Therefore $c(\Til{\Omega^1}(-1))=(1-h)c(\Til{{\Omega'}^1}(-1))$, and again
by Lemma~\ref{exseq} we get
$c(\Omega^1_{\Pbb^n}(\log A))= (1-h) c(\Til {{\Omega'}^1}(-1))$, and hence
\[
c(\Omega^1_{\Pbb^n}(\log A))^\vee= (1+h)\, c(\Til{{\Omega'}^1}(-1)^\vee)
=(1+h)\, g(h)\quad.
\]
It follows that
\[
\csm(M(\cA))=(1+h) \,g(h)\cap [\Pbb^n]=c(\Omega^1_{\Pbb^n}(\log A))^\vee
\cap [\Pbb^n]\quad,
\]
which is ($\dagger$) for $\cA$, as claimed.

Therefore, we may assume that the arrangement is essential. By Theorem~\ref{MusSch},
\[
c(\Til{\Omega^1})=\pi_\hcA (h)=(1+h)\, \underline{\pi_\hcA}(h)
\]
in $A^*\Pbb^n$ (that is, modulo $h^{n+1}$). Using Lemma~\ref{exseq}, it follows that
\[
\underline{\pi_\hcA}(h) \equiv (1+h)^{-1} c(\Til{\Omega^1})\mod h^{n+1}
\equiv c(\Omega^1_{\Pbb^n}(\log A)\otimes \cO_{\Pbb^n}(1)) \mod h^{n+1}\quad,
\]
and hence
\[
\underline{\pi_\hcA}(h) = c(\Omega^1_{\Pbb^n}(\log A)\otimes \cO_{\Pbb^n}(1))
\]
as polynomials in $h$, since both sides have degree $\le n$.
Now (as in \S\ref{genera}) we factor $\underline{\CP_\hcA}(t)=(t-\alpha_1)\cdots (t-\alpha_n)$
over an extension\footnote{According to a theorem of Terao, the polynomial of a free
arrangement actually factors over $\Zbb$; cf.~\S\ref{posifree}. This is not needed here.} 
of $\Qbb$, 
and note that $\underline{\pi_\hcA}(t) = (1+\alpha_1 t)\cdots (1+\alpha_n t)$.
With this notation, we have shown
\[
c(\Omega^1_{\Pbb^n}(\log A)\otimes \cO_{\Pbb^n}(1))
=(1+\alpha_1 h)\cdots (1+\alpha_n h)\quad,
\]
and it follows that
\begin{align*}
c(\Omega^1_{\Pbb^n}(\log A)^\vee) &=(1-\alpha_1 h+h)\cdots (1-\alpha_n h+h) \\
&=h^n\left(1+\frac 1h-\alpha_1\right) \cdots \left(1+\frac 1h-\alpha_n\right) \\
&=h^n \underline{\CP_\hcA}\left(1+\frac 1h\right)
\end{align*}
with the usual caveat that the right-hand side must be interpreted as the polynomial
obtained by expanding it.
By Theorem~\ref{CSM} this proves
\[
c(\Omega^1_{\Pbb^n}(\log A)^\vee) \cap [\Pbb^n]
=\csm(M(\cA))\quad,
\]
and we are done.
\end{proof}

\begin{remark}
The projective version of Theorem~4.1 from~\cite{MR1843320} used above is also given 
in~\cite{denhamschulze}, \S5, and generalized to locally tame arrangements.
\qede
\end{remark}


\section{Segre classes of singularity subschemes}\label{segresec}

\subsection{}
In~\cite{MR2424913}, M.~Wakefield and M.~Yoshinaga prove that any (essential)
projective arrangement $\cA$ may be reconstructed from the {\em singularity
subscheme\/} $S$ of the hypersurface $A\subseteq \Pbb^n$, that is, the subscheme
defined by the partial derivatives of an equation for~$A$. In this section we prove 
that the polynomial $\underline{\pi_\hcA}(t)$ determines and is determined 
by the degree of the arrangement and the Segre class (cf.~\cite{85k:14004}, 
Chapter~4) of the singularity subscheme in $\Pbb^n$.
As we will use Chern--Schwartz--MacPherson classes for this result, we still work
over algebraically closed fields of characteristic~$0$. The following statement
makes sense over any field, but we do not know whether it holds in such
generality.

\begin{theorem}\label{segrethm}
Let $\iota:S\hookrightarrow \Pbb^n$ be the singularity subscheme of an arrangement 
$\cA$ in $\Pbb^n$. (That is, $S$ is defined by the partial derivatives of an equation 
for the hypersurface~$A$.) Let $\sigma_i\in \Zbb$ be such that
\[
[\Pbb^n]-\iota_* s(S,\Pbb^n) = \sum_{i=0}^n \sigma_i h^i \cap [\Pbb^n]  \in A_*\Pbb^n\quad.
\]
Then
\[
\underline{\pi_\hcA}(t) = \sum_{k=0}^n
\left(\sum_{i=0}^k \binom k i (d-1)^{k-i} \sigma_i\right) t^k\quad.
\]
\end{theorem}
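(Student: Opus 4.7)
The plan is to combine Aluffi's formula expressing the CSM class of a hypersurface in terms of the Segre class of its singularity subscheme with the expression for $\csm(M(\cA))$ provided by Corollary~\ref{poinc}. For the degree-$d$ hypersurface $A\subset\Pbb^n$, Aluffi's formula yields $\csm(A)$---and hence $\csm(M(\cA))=\csm(\Pbb^n)-\csm(A)$---as an explicit rational expression in $h$ involving $c(T\Pbb^n)=(1+h)^{n+1}$, $c(\mathcal{O}(d))=1+dh$, and $\iota_*s(S,\Pbb^n)$, with codimension shifts absorbed by the formal operations $(-)^\vee$ and $\otimes_{\Pbb^n}\mathcal{O}(d)$ on Chow classes. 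Collecting terms yields
\[
\csm(M(\cA))=\frac{(1+h)^{n+1}}{1+dh}\cap\Bigl(\bigl([\Pbb^n]-\iota_*s(S,\Pbb^n)\bigr)^\vee\otimes_{\Pbb^n}\mathcal{O}(d)\Bigr),
\]
and, using the decomposition $[\Pbb^n]-\iota_*s(S,\Pbb^n)=\sum_i\sigma_ih^i\cap[\Pbb^n]$ from the statement, this expands to
\[
\csm(M(\cA))=\frac{(1+h)^{n+1}}{1+dh}\sum_{i=0}^n\sigma_i\frac{(-h)^i}{(1+dh)^i}\cap[\Pbb^n].
\]

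The second ingredient is Corollary~\ref{poinc}, which also computes
\[
\csm(M(\cA))=(1+h)^n\,\underline{\pi_\hcA}\!\left(\frac{-h}{1+h}\right)\cap[\Pbb^n].
\]
Equating the two expressions and substituting $h=-t/(1+t)$---so that $1+h=1/(1+t)$, $1+dh=(1-(d-1)t)/(1+t)$, and $-h/(1+h)=t$---collapses the rational factors and produces, modulo $t^{n+1}$, the identity
\[
\underline{\pi_\hcA}(t)=\frac{1}{1-(d-1)t}\sum_{i=0}^n\sigma_i\left(\frac{t}{1-(d-1)t}\right)^i.
\]

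Expanding each $(1-(d-1)t)^{-(i+1)}=\sum_{j\geq 0}\binom{i+j}{i}(d-1)^jt^j$ and regrouping by the total power $k=i+j$, the coefficient of $t^k$ on the right is exactly $\sum_{i=0}^k\binom{k}{i}(d-1)^{k-i}\sigma_i$, which is the claimed formula.

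I expect the principal obstacle to be invoking Aluffi's formula for $\csm(A)$ correctly---in particular, tracking the signs and codimension shifts implicit in the formal operations $(-)^\vee$ and $\otimes_{\Pbb^n}\mathcal{O}(d)$, so that the Milnor-class contribution combines with the virtual class $c(T\Pbb^n)\cdot dh/(1+dh)\cap[\Pbb^n]$ and the total tangent class $c(T\Pbb^n)\cap[\Pbb^n]=\csm(\Pbb^n)$ into the compact form above. Once this is done, the change of variables $h\mapsto -t/(1+t)$ is a standard manipulation on polynomials modulo $h^{n+1}$, and the resulting combinatorial expansion is mechanical.
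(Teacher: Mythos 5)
Your proposal is correct and follows essentially the same route as the paper: both combine Corollary~\ref{poinc} with the hypersurface formula of Theorem~I.4 of \cite{MR2001i:14009}, perform the substitution $h=-t/(1+t)$, and expand $(1-(d-1)t)^{-(i+1)}$ to extract the coefficients. The only cosmetic difference is that you equate two expressions for $\csm(M(\cA))$ (after packaging the Milnor-class term as $([\Pbb^n]-\iota_*s(S,\Pbb^n))^\vee\otimes_{\Pbb^n}\cO(d)$), whereas the paper equates two expressions for $\csm(A)$; the sign and twist bookkeeping you flag as the main risk is handled correctly.
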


\noindent Matching this formula with the expression for $\underline{\pi_\hcA}(t)$ obtained 
in the wake of Corollary~\ref{OSredux} yields the formula given in the introduction
for the ranks of the cohomology of the complement. 

\begin{proof}
By Corollary~\ref{poinc},
\begin{equation*}
\tag{$\dagger$}
\csm(A)=c(T\Pbb^n)\cap [\Pbb^n] - (1+h)^n 
\underline{\pi_\hcA}\left(\frac{-h}{1+h}\right) \cap [\Pbb^n]\quad.
\end{equation*}
On the other hand, by Theorem~I.4 in \cite{MR2001i:14009},
\[
\csm(A)=c(T\Pbb^n)\cap \left(\frac{d h}{1+dh}\cap [\Pbb^{n-1}]
+\frac 1{1+dh} \cap \big((\iota_* s(S,\Pbb^n))^\vee \otimes_{\Pbb^n} \cO(dh)\big)\right)
\]
where $d$ is the degree of the arrangement, and this expression uses notation
given in~\cite{MR2001i:14009}, \S1.4. Writing $\iota_* s(S,\Pbb^n)=\sum_{i=0}^n
s_i [\Pbb^i]$, this means
\begin{equation*}
\tag{$\ddagger$}
\csm(A)=(1+h)^{n+1}\left(\frac{d h}{1+dh}
+\frac 1{1+dh} \sum_{i=0}^n \frac{s_i\cdot (-h)^{n-i}}{(1+dh)^{n-i}}
\right)\cap [\Pbb^n]
\end{equation*}
Comparing $(\dagger)$ and $(\ddagger)$ gives the following equality of series
modulo $h^{n+1}$:
\[
\underline{\pi_\hcA}\left(\frac{-h}{1+h}\right)
=\frac {1+h}{1+dh} - \frac {1+h}{1+dh} 
\sum_{i=0}^n \frac{s_i\cdot (-h)^{n-i}}{(1+dh)^{n-i}}\quad,
\]
and setting $t=-h/(1+h)$ yields
\[
\underline {\pi_\hcA}(t) \equiv \frac 1{1-(d-1)\, t}\left(
1- \sum_{i=0}^n s_i\cdot \left(\frac t{1-(d-1)\, t}\right)^{n-i}\right)
\mod t^{n+1}\quad.
\]
Now with notation as in the statement we have $\sigma_0=1$ and $\sigma_i=
-s_{n-i}$ for $i>0$, and therefore
\[
\underline {\pi_\hcA}(t) \equiv \frac 1{1-(d-1)\, t} \,
\sum_{i=0}^n \sigma_i\cdot \left(\frac t{1-(d-1)\, t}\right)^i
\mod t^{n+1}\quad.
\]
The statement follows immediately from this equality.
\end{proof}

Using notation as in \S1.4 of~\cite{MR2001i:14009}, the formula given in 
Theorem~\ref{segrethm} may be rewritten as
\begin{equation*}
\tag{*}
\underline {\pi_\hcA}(h)\cap [\Pbb^n]
=\frac 1{1-(d-1) h}\cap \big(([\Pbb^n]-\iota_*s(S,\Pbb^n))\otimes_{\Pbb^n} \cO(-(d-1)h)\big)
\quad.
\end{equation*}
This is occasionally convenient in concrete computations, see~e.g., Example~\ref{book}.

\subsection{}
We illustrate Theorem~\ref{segrethm} with a few examples, in which the computation
of the Poincar\'e polynomial of the arrangement can also be performed easily with 
standard techniques. Algorithms computing Segre classes may be implemented in 
software systems such as Macaulay2 (\cite{M2}); one such implementation is 
described in~\cite{MR1956868}. See Example~\ref{counter} for an illustration of 
the use of such a routine.

\begin{example}
The three transversal intersections of the configuration of Example~\ref{fourlinesex}
count for one point each in the Segre class of the singularity subscheme. To
evaluate the contribution of the triple intersection, write it in local coordinates as
the singularity subscheme of $xy(x+y)=0$; the Jacobian ideal is then
\[
(2xy+y^2,x^2+2xy)
\]
and it follows that the contribution to the Segre class is $4$ points.
Thus $(\sigma_0,\sigma_1,\sigma_2)=(1,0,-7)$, and Theorem~\ref{segrethm}
gives
\[
\underline{\pi_\hcA}(t)=1+3t+t^2\quad.
\]
Therefore $\pi_\hcA(t)=(1+t)\underline{\pi_\hcA}(t)=1+4t+5t^2+2t^3$, i.e.,
$\CP_\hcA(t)=t^3-4t^2+5t-2$, as it should.
\qede
\end{example}

\begin{example}
Let $\cA$ consist of three planes in $\Pbb^3$, with equation $xyz=0$.
\end{example}
\begin{wrapfigure}{l}{0.45\textwidth}
  \begin{center}
    \includegraphics[width=0.3\textwidth]{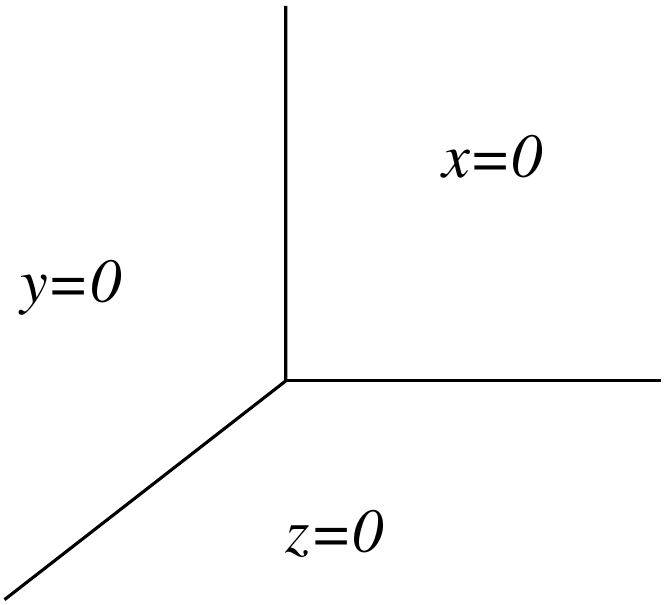}
  \end{center}
\end{wrapfigure}
The singularity subscheme $S$ is supported on three lines; it is defined by the 
ideal $(yz,xz,xy)$, so it is the intersection of three quadrics $Q_1,Q_2,Q_3$. 
The Segre class $s(S,\Pbb^3)$ is $3[\Pbb^1]+m[\Pbb^0]$ for some integer~$m$. 
One way to evaluate $m$ is
the following: the intersection product of the three quadrics must be $8$
by B\'ezout's theorem, and can be evaluated by applying the `basic
construction' (Proposition~6.1 (a) in \cite{85k:14004}) to the fiber diagram
\[
\xymatrix{
S=Q_1\cap Q_2\cap Q_3 \ar[r] \ar[d] & \Pbb^3 \ar[d] \\
Q_1\times Q_2\times Q_3 \ar[r] & \Pbb^3\times \Pbb^3\times \Pbb^3
}
\]
Writing $h_i$ for the hyperplane class in the $i$-th copy of $\Pbb^3$,
this gives
\[
8=\int (1+2h_1)(1+2h_2)(1+2h_3)\cap (3[\Pbb^1]+m[\Pbb^0]) = 18+m\quad,
\]
from which $m=-10$.

With notation as in Theorem~\ref{segrethm} we have $(\sigma_0,\dots,\sigma_3)
=(1,0,-3,10)$, from which
\[
\underline{\pi_\hcA}(t)=1+2t+(4-3)t^2+(8-3\cdot 2 \cdot 3+10)t^3 = 1+2t+t^2=(1+t)^2\quad.
\]
Therefore $\pi_\hcA(t)=(1+t)^3$. Of course this agrees with Example~\ref{norcro}, since
$\cA$ is a generic arrangement.
\qede
\medskip

\begin{example}\label{book}
Let $\cA$ consist of $d$ hyperplanes in the pencil of hyperplanes containing a fixed
codimension-$2$ subspace in $\Pbb^n$.
\end{example}
\begin{wrapfigure}{l}{0.45\textwidth}
  \begin{center}
    \includegraphics[width=0.28\textwidth]{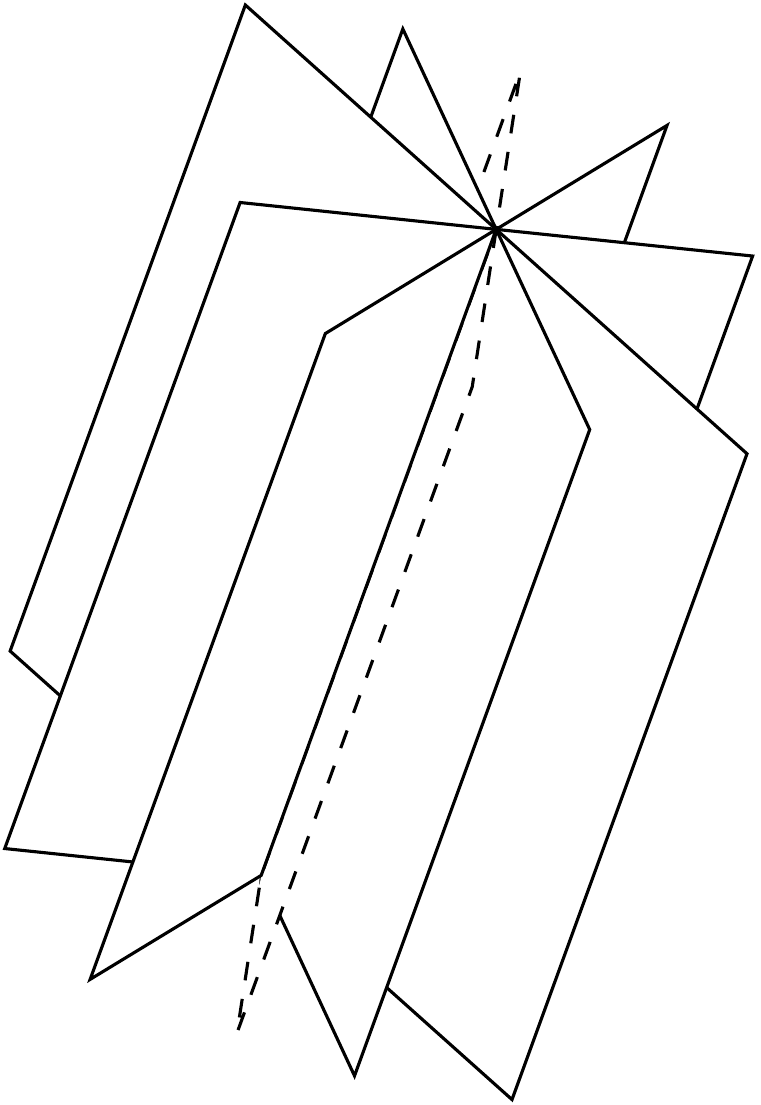}
  \end{center}
\end{wrapfigure}
The singularity subscheme $S$ is supported on $\Pbb^{n-2}$. To evaluate its
Segre class, blow-up along this subspace; if $E$ is the exceptional divisor,
the Segre class of the latter pushes forward to the Segre class of $\Pbb^{n-2}$,
by the birational invariance of Segre classes:
$
\frac{E}{1+E} \mapsto s(\Pbb^{n-2},\Pbb^n) = \frac 1{(1+h)^2}\cap [\Pbb^{n-2}]
$.
A straightforward computation shows that the singularity subscheme pulls
back to $(d-1)$ times the exceptional divisor. Therefore (again by birational
invariance) $s(S,\Pbb^n)$ is the push-forward of $(d-1)E/(1+(d-1)E)$, and
matching terms gives
\[
\iota_* s(S,\Pbb^n) = \frac 1{(1+(d-1) h)^2}\cap (d-1)^2 [\Pbb^{n-2}]\quad.
\]
Using (*), we get that
\begin{align*}
\underline{\pi_\hcA}(h)\cap [\Pbb^n] &=
\frac 1{1-(d-1)h}\cap \left([\Pbb^n]-\frac{(d-1)[\Pbb^{n-2}]}{(1+(d-1)h)^2}
\otimes \cO(-(d-1)h)\right) \\
&=\frac 1{1-(d-1)h}\cap \left([\Pbb^n]-(d-1)^2 [\Pbb^{n-2}]\right) \\
&=(1+(d-1)h)\cap [\Pbb^n]\quad.
\end{align*}
Therefore $\underline{\pi_\hcA}(t)=1+(d-1)t$. It follows that
$\pi_{\hcA}(t)=(1+(d-1)t)(1+t)=1+dt+(d-1)t^2$, and $\CP_\hcA(t)=t^{n+1}-d t^n+(d-1)t^{n-1}$.
(This is of course also evident from the poset associated with this arrangement.)
\qede
\medskip


\section{Positivity}\label{posit}

\subsection{}
One problem that prompted us to take a more careful look at hyperplane arrangements
is the issue of {\em positivity\/} of Chern--Schwartz--MacPherson classes. In the nonsingular
case, positivity of Chern classes is well understood; for example, $c(TX)\cap [X]$ is 
effective if $TX$ is generated by global sections (cf.~\cite{85k:14004}, Example~12.1.7).
We know of no such statement for Chern classes of singular varieties, and preciously
few examples are known: the CSM class of a toric variety is represented by an effective 
cycle (this follows from ``Ehlers' formula'', see e.g.,~\cite{MR1197235}), and CSM 
classes of Schubert varieties are conjecturally effective. It is natural to ask the following

\begin{ques}
Denote by $A\subseteq \Pbb^n$ the union of the hyperplanes of an arrangement $\cA$.
For which arrangements $\cA$ is $\csm(A)$ effective?
\end{ques}

Here, by `effective' we mean that $\csm(A)\in A_*\Pbb^n$ should be represented by 
an effective cycle. By Theorem~\ref{CSM}, $\csm(A)$ is determined by the 
characteristic polynomial of $\hcA$, so this is a combinatorial question.

\subsection{}
Here is the explicit translation of effectivity in combinatorial terms:

\begin{prop}\label{effe}
Let $\cA$ be a hyperplane arrangement in~$\Pbb^n$, and let $\mu$ be the
corresponding M\"obius function, as in \S\ref{genera}. Then $\csm(A)$ is
effective if and only if all coefficients of the polynomial
\[
-\sum_{x\ne 0} \mu(x) (t+1)^{\dim x}
\]
are nonnegative.
\end{prop}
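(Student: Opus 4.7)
The plan is to use Theorem~\ref{CSM} together with the additivity $\csm(A)=\csm(\Pbb^n)-\csm(M(\cA))$ to produce a polynomial whose coefficients in the basis $[\Pbb^0],\dots,[\Pbb^n]$ of $A_*\Pbb^n$ are the coefficients of $\csm(A)$, and then rearrange it into the shape in the statement. Effectivity of $\csm(A)$ is by definition the nonnegativity of these coefficients.

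First I would translate each summand into the dictionary $t^k\leftrightarrow[\Pbb^k]$ from Theorem~\ref{CSM}. Since
\[
\csm(\Pbb^n)=(1+h)^{n+1}\cap[\Pbb^n]=\sum_{j=0}^n\binom{n+1}{j+1}[\Pbb^j]
\]
corresponds to $\frac{(t+1)^{n+1}-1}{t}$, and Theorem~\ref{CSM} identifies $\csm(M(\cA))$ with $\underline{\CP_\hcA}(t+1)=\CP_\hcA(t+1)/t$, subtracting yields
\[
\csm(A)\ \longleftrightarrow\ p(t):=\frac{(t+1)^{n+1}-\CP_\hcA(t+1)-1}{t}.
\]
Writing $\CP_\hcA(t)=\sum_{x\in L(\hcA)}\mu(x)t^{\dim x}$ and separating off the bottom element $0=V$ of the poset (for which $\mu(0)=1$ and $\dim 0=n+1$), one has $(t+1)^{n+1}-\CP_\hcA(t+1)=-\sum_{x\ne 0}\mu(x)(t+1)^{\dim x}$, so
\[
t\cdot p(t)+1=-\sum_{x\ne 0}\mu(x)(t+1)^{\dim x}.
\]

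Finally, the constant term of the right-hand side is $1$, and for $k\ge 0$ the coefficient of $t^{k+1}$ is the coefficient of $t^k$ in $p(t)$, which is the coefficient of $[\Pbb^k]$ in $\csm(A)$. Hence all coefficients of $-\sum_{x\ne 0}\mu(x)(t+1)^{\dim x}$ are nonnegative precisely when $\csm(A)$ is effective. I do not anticipate any substantial obstacle: the argument is essentially a bookkeeping translation between $A_*\Pbb^n$ and polynomial coefficients given Theorem~\ref{CSM}, with the only mildly delicate point being the correct isolation of the $0=V$ term in the characteristic polynomial.
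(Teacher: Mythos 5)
Your argument is correct and takes essentially the same route as the paper: the paper likewise writes $\csm(A)=c(T\Pbb^n)\cap[\Pbb^n]-\csm(M(\cA))=-\sum_{x\ne 0}\mu(x)\csm(\underline x)$ and identifies the coefficient of $[\Pbb^{k-1}]$ with the coefficient of $t^k$ in $-\sum_{x\ne 0}\mu(x)(t+1)^{\dim x}$, the only cosmetic difference being that you run the bookkeeping through the polynomial dictionary in the statement of Theorem~\ref{CSM}, while the paper isolates the $x=0$ term in the identity $\csm(M(\cA))=\sum_{x}\mu(x)\csm(\underline x)$ from that theorem's proof. Your treatment of the constant term (equal to $1$, via $\CP_\hcA(1)=0$) matches the paper's use of $\sum_x\mu(x)=0$ and $\mu(0)=1$.
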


In fact, the coefficient of $t^k$ in this expression equals the coefficient of
$[\Pbb^{k-1}]$ in $\csm(A)$, for $k\ge 1$; the constant term equals~$1$.

\begin{proof}
In the (direct) proof of Theorem~\ref{CSM} we obtained the equality
\[
\csm(M(\cA)) =  \sum_{x\in L(\hcA)} \mu(x) \csm(\underline x)\quad,
\]
where $\underline x$ denotes the projective subspace of $\Pbb^n$
determined by $x$. The summand corresponding to $x=0$ is 
$\mu(0) \csm(\underline 0)=c(T\Pbb^n)\cap [\Pbb^n]$. Thus
\[
\csm(A)=c(T\Pbb^n)\cap [\Pbb^n] - \csm(M(\cA))
= -\sum_{x\ne 0} \mu(x) \csm(\underline x)\quad.
\]
Now $\underline x\cong \Pbb^{\dim x-1}$, so
\[
\csm(\underline x) = \sum_{k=1}^{\dim x} \binom{\dim x}{k} [\Pbb^{k-1}]\quad.
\]
Hence the coefficient of $[\Pbb^{k-1}]$ in $\csm(A)$ equals
\[
-\sum_{x\ne 0\,:\, \dim x\ge k} \mu(x)\binom{\dim x}k\quad,
\]
that is, the coefficient of $t^k$ in
\[
-\sum_{x\ne 0} \mu(x) (t+1)^{\dim x}\quad.
\]
This holds for $k\ge 1$. On the other hand, since $\sum_x \mu(x)=0$ and $\mu(0)=1$,
the constant term in this expression is $1>0$. It follows that $\csm(A)$ is effective
if and only if all coefficients of this polynomial are nonnegative, which is the
statement.
\end{proof}

\begin{example}\label{fourlinesex}
We illustrate Proposition~\ref{effe} with a simple example.
\begin{center}
\includegraphics[scale=.5]{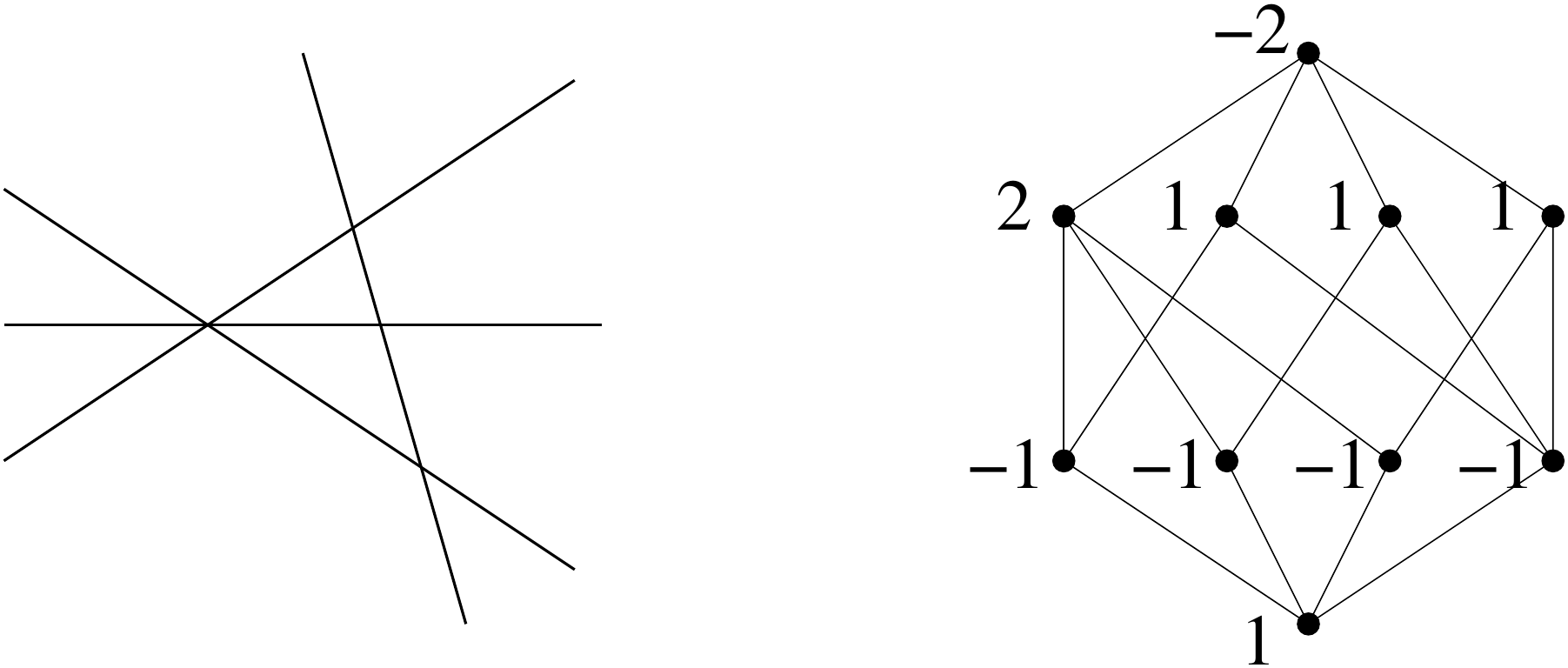}
\end{center}
The poset and M\"obius function for the arrangement on the left (in $\Pbb^2$) 
are given on the right; the $0$ element of the lattice, i.e., $k^3$, is at the bottom.
The polynomial appearing in Proposition~\ref{effe} is
\[ 
-(-4 (t+1)^2 + 5(t+1) -2) = 4 t^2 +3t+1\quad.
\]
As the coefficients are all positive, the CSM class of this arrangement is effective;
this class equals $4[\Pbb^1]+3 [\Pbb^0]$.
\qede
\end{example}

\subsection{}
Heuristically, arrangements of low degree should be more likely to have effective
CSM class. This is the case for generic arrangements:

\begin{prop}
Let $\cA$ be a {\em generic\/} arrangement of $d\ge 1$ distinct hyperplanes in~$\Pbb^n$. 
Then $\csm(A)$ is effective for $n=1$ and all $d$, and for for $n>1$ and
\begin{itemize}
\item $n$ even, $d\le n+3$,
\item $n$ odd, $d\le n+4$,
\end{itemize}
and it is not effective otherwise.
\end{prop}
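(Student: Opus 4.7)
The plan is to specialize the formulas of Section~\ref{CSMsec} to the generic case using Example~\ref{norcro}, reducing effectivity to an elementary inequality between binomial coefficients.

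First, I would combine Example~\ref{norcro} with $\csm(A) = c(T\Pbb^n)\cap[\Pbb^n] - \csm(M(\cA))$ to obtain
\[
\csm(A) = \big((1+h)^{n+1} - (1+h)^{n+1-d}\big) \cap [\Pbb^n],
\]
the right-hand side understood as a truncated polynomial in $h$. Reading off the coefficient of $[\Pbb^k]$ then yields
\[
c_k = \binom{n+1}{n-k} - \binom{n+1-d}{n-k},
\]
and effectivity of $\csm(A)$ is equivalent to $c_k \geq 0$ for $k = 0, 1, \ldots, n-1$; the top coefficient $c_n$ vanishes identically, reflecting $\dim A = n-1$.

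Second, the subcase $d \leq n+1$ is immediate, since both entries of the binomial difference are standard and $\binom{n+1-d}{n-k} \leq \binom{n+1}{n-k}$. For $d \geq n+2$, I would set $m = d-n-1 > 0$ and $j = n-k$, and apply the identity $\binom{-m}{j} = (-1)^j \binom{m+j-1}{j}$ to rewrite
\[
c_k = \binom{n+1}{j} - (-1)^j \binom{m+j-1}{j}.
\]
Odd $j$ contributes a sum of nonnegative terms, so the question reduces to whether $\binom{n+1}{j} \geq \binom{m+j-1}{j}$ holds for every even $j \in \{2, \ldots, n\}$. Strict monotonicity of $\binom{N}{j}$ in $N$ (valid here since $n+1$ and $m+j-1$ are both at least $j$) turns each such inequality into the linear condition $d \leq 2n+3-j$.

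Finally, I would identify the binding constraint as the smallest value of $2n+3-j$ over the admissible even $j$, hence the \emph{largest} even $j$ with $j \leq n$: namely $j = n$ when $n$ is even, giving $d \leq n+3$, and $j = n-1$ when $n \geq 3$ is odd, giving $d \leq n+4$. When $n = 1$ there is no even $j$ in range, so effectivity is vacuous for every $d$. The converse (non-effectivity beyond these bounds) follows from the strict form of the monotonicity: once $d$ exceeds the stated bound, the critical $c_k$ is strictly negative. The argument is essentially bookkeeping; the main obstacle is not conceptual but notational, namely carefully handling the negative-index binomial via $\binom{-m}{j} = (-1)^j \binom{m+j-1}{j}$ and locating the binding even value of $j$.
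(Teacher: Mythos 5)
Your proposal is correct and follows essentially the same route as the paper: both start from Example~\ref{norcro} to write $\csm(A)=\bigl((1+h)^{n+1}-(1+h)^{n+1-d}\bigr)\cap[\Pbb^n]$ and then analyze the signs of the coefficients of the expansion, the non-effectivity direction coming from the largest even $j=n-k$ in range. The paper leaves the effectivity direction as ``easy to verify'' and only details the two non-effectivity estimates; your uniform reformulation of each sign condition as $d\le 2n+3-j$ for even $j$, via monotonicity of $\binom{N}{j}$ in $N$, is a clean way of filling in that bookkeeping.
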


\begin{proof}
The arrangement $\cA$ is generic precisely when $A$ is a divisor with simple normal
crossings. As seen in Example~\ref{norcro}, the CSM class of the complement is
$\csm(M(\cA))=\frac 1{(1+h)^d}\cap (c(T\Pbb^n)\cap [\Pbb^n])$, and hence
$\csm(A)$ equals{\small
\[
\left (1-\frac 1{(1+h)^d}\right) (1+h)^{n+1} \cap [\Pbb^n]
=\sum_{k=0}^n \left(\binom{n+1}k - (-1)^k \binom{k+d-n-2}k\right) h^k\cap[\Pbb^n]
\quad.
\]}
The statement is easy to verify from this expression. If $n$ is even
and $d\ge n+4$, the coefficient of $[\Pbb^0]$ (i.e., the Euler characteristic of $A$) is
bound by
\[
(n+1)-\binom{n+2}2 < 0\quad;
\]
if $n>1$ is odd and $d\ge n+5$, the coefficient of $[\Pbb^1]$ is bound by
\[
\binom{n+1}2 - \binom{n+2}3<0\quad,
\]
so the class is not effective outside of the specified range.\end{proof}

\begin{example}
The smallest generic arrangement with non-effective CSM class consists of
six lines in $\Pbb^2$:
\begin{center}
\includegraphics[scale=.5]{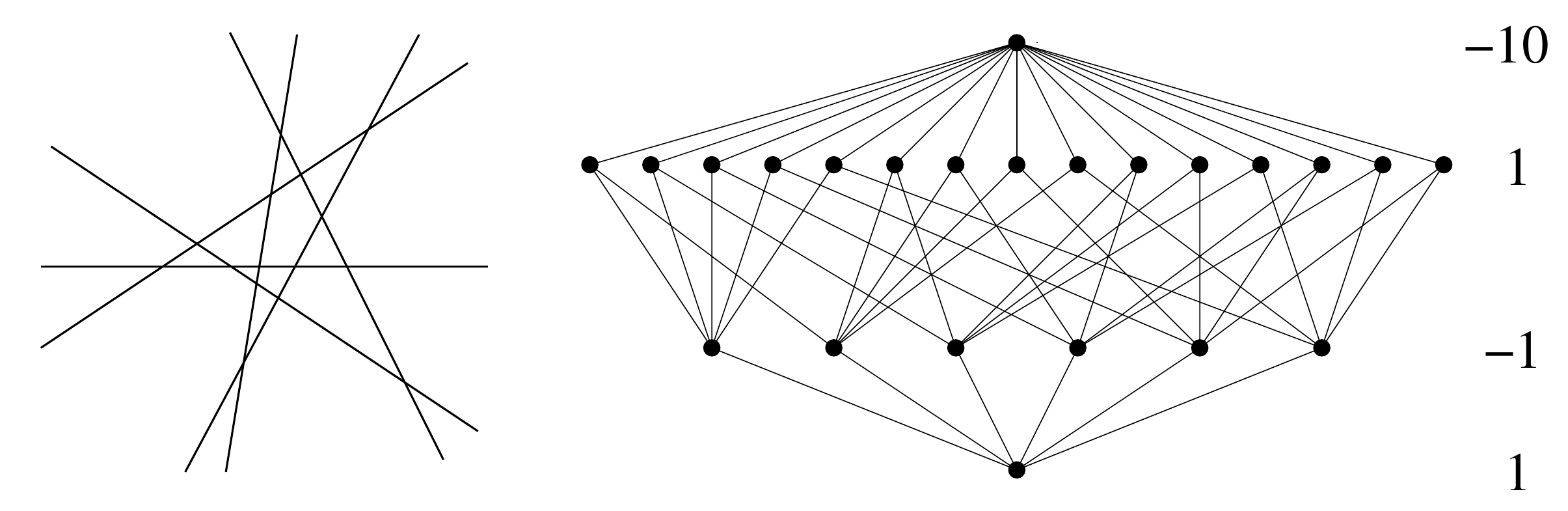}
\end{center}
The polynomial in the statement of Proposition~\ref{effe} is
\[
-(6\cdot (-1)\cdot (t+1)^2 + 15\cdot 1\cdot (t+1) -10)
=6 t^2 -3 t+1\quad,
\]
and not all its coefficients are nonnegative. The CSM class of this arrangement
is $6[\Pbb^1]-3[\Pbb^0]$; its Euler characteristic is $-3$.
\qede
\end{example}

\subsection{}\label{posifree}
Another source of interesting examples comes from free arrangements. 

\begin{prop}\label{freeposi}
For $n\le 8$, every free arrangement $\cA$ of $d\le n$ hyperplanes in $\Pbb^n$ has
effective CSM class.
\end{prop}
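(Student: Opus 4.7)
The plan is to combine Theorem~\ref{freethm} with Terao's factorization theorem (the one mentioned in the footnote preceding~\S\ref{posifree}), which ensures that the characteristic polynomial of a free arrangement factors over~$\Zbb$. Writing $\underline{\CP_\hcA}(t) = \prod_{i=1}^n (t - \alpha_i)$ with nonnegative integers $\alpha_i$ (the `projective exponents', satisfying $\sum_i \alpha_i = d - 1$), Theorem~\ref{freethm} yields
\[
\csm(A) = \left[(1+h)^{n+1} - \prod_{i=1}^n (1 + (1-\alpha_i)h)\right] \cap [\Pbb^n].
\]
Effectivity of $\csm(A)$ amounts to the requirement that the bracketed polynomial have nonnegative coefficients in degrees $\le n$.

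The hypothesis $d \le n$ forces $\sum \alpha_i \le n - 1$, so at least $n - k \ge n - d + 1 \ge 1$ of the $\alpha_i$ vanish, where $k$ denotes the number of strictly positive exponents (equivalently, the number of parts in the partition $\lambda$ of $d - 1$ formed by the nonzero $\alpha_i$). Each zero exponent contributes a factor of $(1+h)$ to the product, so I would rewrite the expression as
\[
\csm(A) = (1+h)^{n-k}\, Q_\lambda(h) \cap [\Pbb^n], \quad\text{where}\quad Q_\lambda(h) := (1+h)^{k+1} - \prod_{j=1}^k (1 + (1-\alpha_{i_j})h).
\]
Since multiplication by $(1+h)$ preserves nonnegativity of polynomial coefficients, it is enough to verify that $(1+h)^{d-k}Q_\lambda(h)$ has nonnegative coefficients (corresponding to the smallest value of $n$ allowed by the hypothesis); effectivity for all larger $n$ will then follow automatically.

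For $n \le 8$ (hence $d - 1 \le 7$), the partition $\lambda$ is one of the $44$ partitions of an integer between~$1$ and~$7$, so the required verification reduces to a finite elementary computation, one polynomial per partition. The hard part will be handling partitions for which $Q_\lambda$ itself has negative coefficients: for instance, for a two-part partition $\lambda = (a, b)$ one computes $Q_\lambda(h) = dh + (3 - (a-1)(b-1))h^2 + h^3$, which is negative in degree~$2$ whenever $(a-1)(b-1) > 3$ (e.g.\ $\lambda = (5,2)$ or $(4,3)$ at $d = 8$). In each such case one must check explicitly that the factor $(1+h)^{d-k}$ suffices to absorb the negative contributions. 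The sharpness of the bound $n \le 8$ will be visible from the partition $\lambda = (4,4)$ at $d = 9$, for which $Q_\lambda(h) = h(h - 3)^2$ cannot be made nonneg-coefficient by multiplication by any power of $(1+h)$; this will yield the counterexample exhibited later in \S\ref{posifree}.
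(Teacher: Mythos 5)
Your proposal is correct and follows essentially the same route as the paper: Terao's factorization over $\Zbb$ reduces effectivity to a finite verification over the numerically possible exponent sequences with $\sum_i\alpha_i=d-1\le 7$, which the paper carries out by computer via Proposition~\ref{effe}, and which you organize more efficiently by factoring out $(1+h)^{n-k}$ and checking only the minimal case $n=d$ for each of the $44$ partitions --- a sound reduction, since the top coefficient of $(1+h)^{d-k}Q_\lambda$ is $1$ and multiplication by $(1+h)$ preserves nonnegativity of coefficients (also note that Theorem~\ref{CSM} alone gives your displayed formula for $\csm(A)$, so the appeal to Theorem~\ref{freethm} is unnecessary). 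One small correction to your closing remark: the counterexample exhibited in \S\ref{posifree} has exponents $1,3,5$, i.e.\ $\lambda=(5,3)$, for which $Q_\lambda=9h-5h^2+h^3$ is positive for $h>0$ yet $(1+h)^{7}Q_\lambda$ still has negative coefficients in degrees $7$ and $8$; your partition $(4,4)$ would also fail numerically, but using it to witness sharpness would require exhibiting a free arrangement realizing those exponents.
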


\begin{proof}
According to a theorem of Terao (\cite{MR608532}; see also \cite{MR1217488},
Chapter~4), the characteristic polynomial of a free central arrangement $\hcA$
in $k^{n+1}$ factors over $\Zbb$:
\[
\CP_{\hcA}(t)=(t-d_1)\cdots (t-d_n)\cdot (t-d_{n+1})\quad,
\]
where the $d_i$'s are the `exponents' of the arrangement, i.e., the 
degrees of the generators of the (free) module of $\hcA$-derivations
(Definitions~4.5 and~4.25 in~\cite{MR1217488}). One of the exponents
necessarily equals~$1$ (cf.~\S\ref{genera}); the sum of the exponents
equals the number of hyperplanes in the arrangement (\cite{MR1217488},
Proposition~4.26).

Thus, we may assume that the characteristic polynomial of the arrangement is
\[
(t-d_1)\cdots (t-d_n) (t-1)
\]
with $n\le 8$, $d_i\in \Nbb$, $d_1+\cdots + d_8 \le n-1$. With the aid of a 
computer, applying Proposition~\ref{effe} to all these cases is straightforward.
\end{proof}

Based on Proposition~\ref{freeposi} and the case of generic arrangements, one
may be tempted to guess that arrangements of $d\le n$ hyperplanes in $\Pbb^n$
have effective CSM class. The following is the smallest counterexample
to this statement, for free arrangements.

\begin{example}\label{counter}
The polynomial
\[
\left|
\begin{matrix}
x_0 & x_0^3 & x_0^5 \\
x_1 & x_1^3 & x_1^5 \\
x_2 & x_2^3 & x_2^5
\end{matrix}
\right|
= x_0 x_1 x_2 (x_0-x_1)(x_0-x_2)(x_1-x_2)(x_0+x_1)(x_0+x_2)(x_1+x_2)
\]
defines a free arrangement of $9$ lines in $\Pbb^2$, with exponents $1$, $3$, $5$.
The corresponding $9$ planes in $k^3$ meet along $13$ distinct lines; $6$ of these
lines lie on $2$ planes, $4$ on $3$, and $3$ on $4$ planes. It follows that the 
M\"obius function takes values $1$ at $6$ lines, $2$ at $4$, and $3$ at $3$. 
It also follows that the value of the M\"obius function for the affine arrangement
at the origin is $-15$.
\begin{center}
\includegraphics[scale=.5]{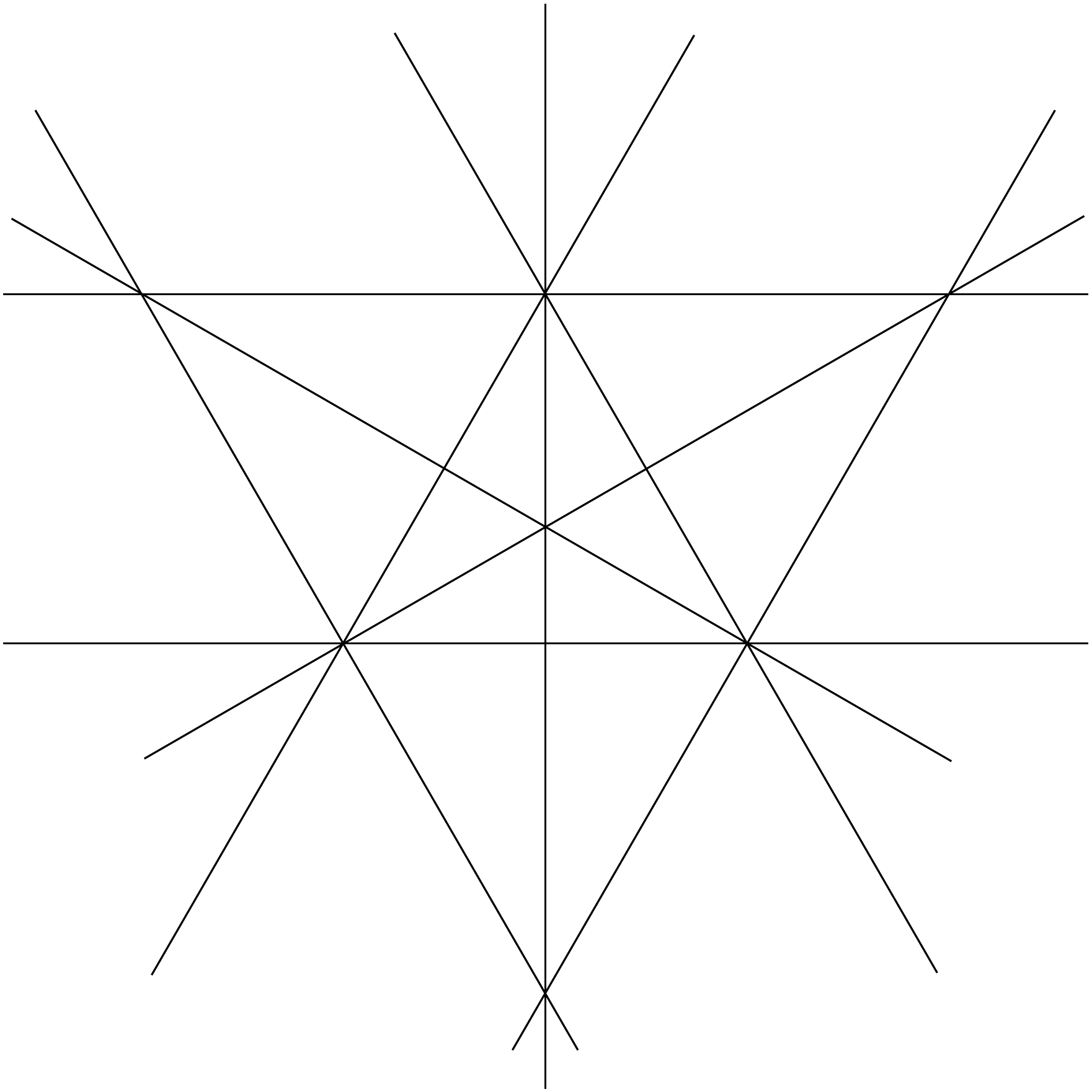}
\end{center}

The cone over this projective arrangement in $\Pbb^9$ is a free arrangement $\cA$
of $9$ hyperplanes, with characteristic polynomial
\[
\CP_{\hcA}(t)=t^{10}-9t^9+(6\cdot 1+4\cdot 2+3\cdot 3) t^8-15t^7= (t-5)(t-3)(t-1)t^7\quad.
\]
Using the criterion in Proposition~\ref{effe}, we compute
\begin{multline*}
-(-9 (t+1)^9 +  23(t+1)^8 -15 (t+1)^7) \\
= 9 t^9+58 t^8 + 155 t^7+217 t^6+161 t^5 +49 t^4 \underline{-7 t^3-5 t^2}+2 t+1
\quad,
\end{multline*}
verifying that the Chern--Schwartz--MacPherson class of this arrangement is not effective.

We end by remarking that the CSM routine described in \cite{MR1956868} and
implemented in Macaulay2 offers a quick verification of this computation: the 
calculation of this CSM class from the equation of the arrangement takes
about $.1$ seconds on a laptop computer. The same routine may be used to 
compute the Segre class of the singularity subscheme, giving with notation as in 
Theorem~\ref{segrethm}
\[
(\sigma_0,\dots,\sigma_9)=(1,0,-49,664,-6528,54272,-389120,2260992,-7340032,
-58720256).
\]
Applying Theorem~\ref{segrethm} gives then
\[
\underline{\pi_\hcA}(t)=1+8t+15t^2=(1+3t)(1+5t)\quad,
\]
in agreement with the combinatorial computation shown above.
\qede
\end{example}


\newpage

\end{document}